\newtheorem{thm}{Theorem}[section]
\newtheorem{cor}[thm]{Corollary}
\newtheorem{lem}[thm]{Lemma}
\theoremstyle{definition}
\theoremstyle{remark}
\newtheorem{rem}[thm]{Remark}
\numberwithin{equation}{section}
\newcommand{\Real}{\mathbb{R}}
\begin{document}
\title[Control and Stabilization for linear dispersive PDE's on $\mathbb{T}^2$]{Exact controllability and stabilization for linear dispersive PDE's on the two-dimensional torus}


%
\author{Francisco J. Vielma-Leal}\address{Departamento de Matemática, Facultad de Ciencias Naturales, Matemática y del Medio Ambiente, Universidad Tecnológica Metropolitana, Santiago, Chile.}
\email{fvielma@utem.cl}

\author{Ademir Pastor}\address{IMECC-UNICAMP, Rua S\'ergio Buarque de Holanda, 651, 13083-859, Campinas, SP, Brasil.}
\email{apastor@ime.unicamp.br}
%
%
\begin{abstract}
The moment method is used to prove the exact controllability of a wide class of bidimensional linear dispersive PDE's posed on the two-dimensional torus  $\mathbb{T}^{2}.$ The control function is considered to be acting on a small vertical and horizontal strip of the torus. Our results apply to several well-known models including some bidimesional extensions of the Benajamin-Ono and Korteweg-de Vries equations. As a by product, the exponential stabilizability with any given decay rate is also established in  $H^{s}_{p}(\mathbb{T}^{2}),$ with $s\geq 0,$ by constructing an appropriated feedback control law.
\end{abstract}
%
%
\subjclass[2020]{93B05, 93D15, 35Q53}
\keywords{Dispersive equations, Controllability, Stabilization,  Benjamin-Ono equation, Zakharov-Kuznetsov equation}
\maketitle

\section{Introduction}

The controllability and stabilizability for the linear Schrödinger equation on higher dimensions have been intensively studied during the last years, see for instance \cite{Fabre, Lasiecka and Triggiani, Lebeau, Phung,  Taufer} and references therein. When the problem is posed on a periodic domain, there are   pioneering works on this issue developed by the authors in \cite{Dehman Gerard and Lebeau, Camille, Miller} for the linear and nonlinear Schrodinger equations in dimensions 2 and 3 (see also \cite{Camille 1}). However, as far as we know,
there are a few works addressing the problems of exact controllability and asymptotic stabilization for bidimensional linear dispersive-type equations on a periodic setting. To the best of our knowledge, the only work dealing with this problem for a different dispersive model is the recent one in \cite{Rivas and Chenmin}, where the authors study the internal controllability of a non-localized solution for the linear and non-linear Kadomtsev-Petviashvili II equation.

As is well known, the first step to study the controllability of a nonlinear equation is to understand the controllability of the corresponding linear equation. So, our main goal in this paper is to investigate the control properties of a quite general class of linear dispersive equations on the  two-dimensional torus $\mathbb{T}^{2}:=\mathbb{R}^{2}/(2\pi \mathbb{Z})^{2}.$ More precisely, we are interested in the equation 
\begin{equation}\label{FEQ}
\partial_{t}u-\partial_{x}\mathcal{L}u=0, \;\;(x,y)\in \mathbb{T}^{2},\;t\in \mathbb{R},
\end{equation}
where $u\equiv u(x,y,t)$ denotes a real-valued function of three real variables $x,y$ and $t,$ and $\mathcal{L}$ denotes a linear Fourier multiplier operator. We assume that such multiplier $\mathcal{L}$ is of ``order'' $r-1,$ for some $r\in \mathbb{R},$ with   $r\geq 1.$ This means that the symbol $b:\mathbb{Z}^{2} \to \mathbb{R}$ satisfies
\begin{equation}\label{symbol}
	\widehat{\mathcal{L}u}(\mathbf{k})=b(\mathbf{k})\widehat{u}(\mathbf{k}),\;\;\forall\;\mathbf{k}=(k_{1},k_{2})\in \mathbb{Z}^{2},
\end{equation}
where $\widehat{u}$ stands for the (periodic) Fourier transform of $u$ (see \eqref{Ft}), and
\begin{equation}\label{Scont}
	|b(\mathbf{k})|\leq C |\mathbf{k}|^{r-1},
\end{equation}
for some positive constant $C$  and
$|\mathbf{k}|:=\sqrt{k_{1}^{2}+k_{2}^{2}}\geq N_{0},$ for some $N_{0}\geq 0.$ 
In view of the Parseval identity, it is easy to see that $\mathcal{L}$ is a self-adjoint operator on $L_{p}^{2}(\mathbb{T}^{2})$ (see Section \ref{preliminares} for notations)  and  commutes with derivatives.

There are several models that fit in the abstract form \eqref{FEQ}. For instance, the bidimensional versions of the Benjamin-Ono (BO) and Korteweg-de Vries (KdV) equations on a periodic setting. Specifically, the 2D Benjamin-Ono  ($\mathcal{L}=\mathcal{H}^{(x)}\partial_{y}$) and the Zakharov-Kuznetsov ($\mathcal{L}=-\Delta$) equations, where $\mathcal{H}^{(x)}$ denotes the Hilbert transform with respect to the $x$-variable and $\Delta$ is the bidimensional Laplacian operator. More general equations can also be written in the abstract form \eqref{FEQ}, for example, the Benjamin-Ono-Zakharov-Kuznetsov 
($\mathcal{L}=\mathcal{H}^{(x)}\partial_{x}-\partial_{y}^{2}$) and the dispersion generalized BOZK ($\mathcal{L}=D^{\alpha}_{x}-\partial_{y}^{2}$) equations, where $D^{\alpha}_{x}$ is defined  for $\alpha>0.$ As we will see below, all these equations may be treated in a single way.

As usual, the idea to study the controllability of \eqref{FEQ} is to add a forcing term $f\equiv f(x,y,t)$ as a control input. So, we shall consider the following non-homogeneous initial-value problem (IVP):
\begin{equation}\label{FEQ1}
	\partial_{t}u-\partial_{x}\mathcal{L}u=f,\;\;\; u(x,y,0)=u_{0}(x,y), \;\;\;(x,y)\in \mathbb{T}^{2},\;t\in \mathbb{R},
\end{equation}
for some suitable  control $f$. Here, we will assume that $f$ is acting on a small set composed by the union of a vertical and a horizontal strip; this means, $f$ is assumed to be supported on a set of the form $\left((\omega_{1}\times \mathbb{T}) \cup (\omega_{2}\times \mathbb{T})\right) \subset \mathbb{T}^{2},$ where $\omega_{1}$ and $\omega_{2}$ are small open intervals in $\mathbb{T}$ (see Figure \ref{regioncontrol}).
\begin{figure}[h!]
\begin{center}
	\includegraphics[width=9cm]{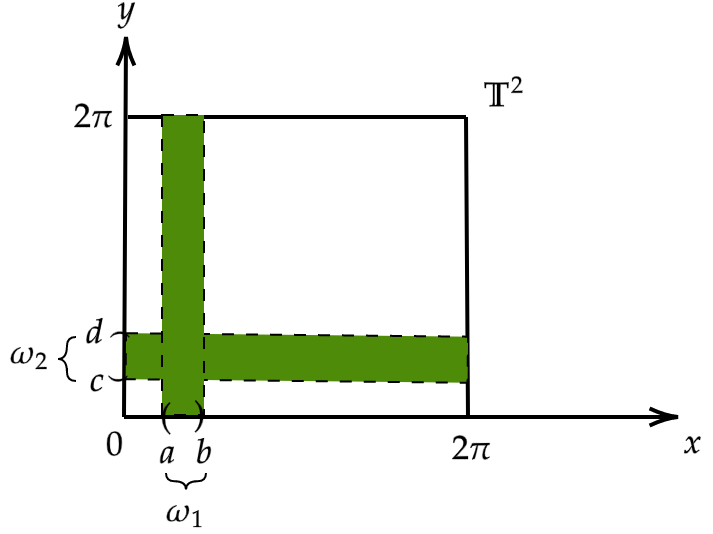}
	\caption{Region where the control $f$ is acting.}\label{regioncontrol}
\end{center}
\end{figure}

Note that \eqref{FEQ} conserves the total mass, that is, the quantity
$$
	\int_{\mathbb{T}^{2}} u(x,y,t)\;dxdy
$$
is conserved by any solution of \eqref{FEQ}. In order to keep the mass conserved in the control system \eqref{FEQ1},
we demand the function $f$ to satisfy
\begin{equation}\label{wcont}
	\int_{\mathbb{T}^{2}} f(x,y,t)\;dxdy=0, \;\;\forall t\in \mathbb{R}.
\end{equation}
In this regard, we consider the control $f$ of the form $Gh$,
where $h$ is a function defined in $\mathbb{T}^{2}\times[0,T]$ and the operator $G:H^{s}_{p}(\mathbb{T}^{2})\to H^{s}_{p}(\mathbb{T}^{2}),$ $s\geq 0,$ is defined in the following way: let $g_{1}$ and $g_{2}$ be  non-negative real-valued functions in $C^{\infty}(\mathbb{T})$ such that
\begin{equation}\label{med1}
	2\pi \widehat{g_{1}}(0)=\int_{\mathbb{T}}g_{1}(x)dx=1,
\end{equation}
\begin{equation}\label{med2}
	2\pi \widehat{g_{2}}(0)=\int_{\mathbb{T}}g_{2}(y)dy=1.
\end{equation}
Assume $\text{supp} \;g_{1}=\overline{\omega_{1}}\subset \mathbb{T}$ and $\text{supp} \;g_{2}=\overline{\omega_{2}}\subset \mathbb{T},$ where $\omega_{1}=\{x\in \mathbb{T}: g_{1}(x)>0\}$  and $\omega_{2}=\{y\in \mathbb{T}: g_{2}(y)>0\}$ are open intervals. Now, we define the operator $G$ as
\begin{equation}\label{opdef}
	G(\phi)(x,y):=g_{2}(y)\;G_{1}(\phi)(x,y)+g_{1}(x)\;G_{2}(\phi)(x,y), \;\;\phi \in  H^{s}_{p}(\mathbb{T}^{2}),\;s\geq 0,
\end{equation}
where
\begin{equation}\label{opG1}
	G_{1}(\phi)(x,y)=\frac{1}{2\pi} \phi(x,y)- \frac{1}{(2\pi)^{2}}\; \int_{0}^{2\pi} \phi(x',y)dx',  
\end{equation}
and 
\begin{equation}\label{opG2}
	G_{2}(\phi)(x,y)=\frac{1}{2\pi} \phi(x,y)- \frac{1}{(2\pi)^{2}}\; \int_{0}^{2\pi} \phi(x,y')dy'.
\end{equation}

It is easy to see that $G$ is linear and self-adjoint as an operator from $L^{2}_{p}(\mathbb{T}^{2})$ into $L^{2}_{p}(\mathbb{T}^{2}).$ In addition, it is bounded in $H_{p}^{s}(\mathbb{T}^{2})$, that is, there exists a constant $C$ depending only on $s,g_{1}$, and $g_{2}$ such that 
\begin{equation}\label{opl}
	\|G(\phi)\|_{H_{p}^{s}(\mathbb{T}^{2})}\leq C \|\phi\|_{H_{p}^{s}(\mathbb{T}^{2})}.
\end{equation}
Since we are setting $f=Gh$, the function $h$ can now be considered as the new control function and for each $t\in [0,T]$ we have that \eqref{wcont} holds. 

Next, we specify the problems that we address in this work, which are fundamental in control theory:

\vskip.2cm

\noindent \textbf{Exact controllability problem:} Let $s\geq 0$ and $T>0$ be given. Assume $u_{0}$ and $u_{1}$ belong to $H^{s}_{p}(\mathbb{T}^{2})$ with $\widehat{u_{0}}(0,0)=\widehat{u_{1}}(0,0).$ Can one find a control input $h$ such that the unique solution of the initial-valued problem (IVP)
\begin{equation}\label{pcont}
\begin{cases}
	\partial_{t}u-\partial_{x}\mathcal{L}u=G(h),\;\;(x,y)\in \mathbb{T}^{2},\;t\in \mathbb{R}, \\
	u(x,y,0)=u_{0}(x,y), 
\end{cases}
\end{equation}
is defined until time $T$ and satisfies $u(x,y,T)=u_{1}(x,y),$ for all $(x,y)\in \mathbb{T}^{2}$?

\vskip.2cm
\noindent \textbf{Asymptotic stabilizability problem:}
Let $s\geq 0$ and $u_{0}\in H^{s}_{p}(\mathbb{T}^{2})$ be given. Can one define a feedback control law $f=G(Ku)$ for some linear operator $K,$ such that the resulting closed-loop system
\begin{equation}\label{stabp} 
\begin{cases}
	\partial_{t}u-\partial_{x}\mathcal{L}u=G(Ku),\;\;(x,y)\in \mathbb{T}^{2},\;t\in \mathbb{R},& \\
	u(x,y,0)=u_{0}(x,y), 
\end{cases}
\end{equation}
is globally well-defined and asymptotically stable to an equilibrium point as $t \to +\infty$?

\vskip.2cm

Let us now describe our results. First, we state a result regarding controllability of equation \eqref{pcont}. Similar to the criteria in \cite{Vielma and Pastor}, the following results directly link the problem of controllability with some specific properties of the eigenvalues associated to the operator $\partial_{x}\mathcal{L}.$ To derive our first result, we assume that $\partial_{x}\mathcal{L}$ has a countable number of eigenvalues which, except on the coordinates axes, have finite  multiplicity. Specifically, we will assume the following hypothesis hold:
\vskip.2cm
\begin{itemize}
	\item [$(H1)$] $\partial_{x}\mathcal{L}\psi_{\mathbf{k}}=i\lambda_{\mathbf{k}} \psi_{\mathbf{k}},$ where  $\psi_{\mathbf{k}}$ is defined in \eqref{orto} and  $\lambda_{\mathbf{k}}=k_{1} b(\mathbf{k}),$ for all $\mathbf{k}=(k_{1},k_{2})\in \mathbb{Z}^{2}.$
\end{itemize}
\vskip.2cm
The eigenvalues in the sequence $\{i\lambda_{\mathbf{k}}\}_{\mathbf{k} \in \mathbb{Z}^{2}}$ are not necessarily distinct and we need to distinguish simple and multiple eigenvalues. Therefore, for each $\mathbf{k}' \in \mathbb{Z}^{2},$ we set       $I(\mathbf{k}') :=\{\mathbf{k} \in \mathbb{Z}^{2}: \lambda_{\mathbf{k}}=\lambda_{\mathbf{k}'} \}$ and $m(\mathbf{k}'):=\#I(\mathbf{k}'),$ where $\#I(\mathbf{k}')$ denotes the number of elements in $I(\mathbf{k}').$ In particular, $m(\mathbf{k}')=1$ if $i\lambda_{\mathbf{k}'}$ is a simple eigenvalue.
\vskip.2cm
\begin{itemize}
	\item [$(H2)$] For any $\mathbf{k}=  (k_{1},k_{2})\in \mathbb{Z}^{2},$ $\lambda_{\mathbf{k}}$ is even in the first variable and odd in the second one, it means,
	$\lambda_{(k_{1},k_{2})}=\lambda_{(-k_{1},k_{2})},$  and $\lambda_{(k_{1},-k_{2})}=-\lambda_{(k_{1},k_{2})}.$ Furthermore, for any $ \mathbf{k}= (k_{1},k_{2})\in \mathbb{Z}^{2},$ with $k_{1}\neq 0,$  we have that the unique entire solution $j_{2}\in \mathbb{Z}$ of equation 
	$\lambda_{(k_{1},j_{2})}=\lambda_{(k_{1},k_{2})}$ is $j_{2}=k_{2}.$ Also,
	 for any $\mathbf{k}=(k_{1},k_{2})\in \mathbb{Z}^{2},$ with  $k_{2}\neq 0,$ the unique entire solutions of equation 
	$\lambda_{(j_{1},k_{2})}=\lambda_{(k_{1},k_{2})}$ are $j_{1}=\pm k_{1}.$
\end{itemize}
\vskip.2cm
Assumptions $(H1)$ and $(H2)$ allows the eigenvalues $i\lambda_{\mathbf{k}}$ to have infinite multiplicity on the coordinate axes. Also, it may occur that $m(\mathbf{k}')\to \infty$ as $\lambda_{\mathbf{k}'}\to \infty$ (see Subsection \ref{BOexample} where a particular example involving the 2D-BO equation is given). Both properties make the problem of exact controllability  associated to \eqref{FEQ1} a great challenge and require us to find strong control functions $f$ acting not only on a small open  subset of $\mathbb{T}^{2}$ but on small subsets of vertical and horizontal strips of the 2-torus (see Figure \ref{regioncontrol}).

If we count only the distinct eigenvalues, we  obtain a countable  (maximal) set $\mathbb{J}\subset \mathbb{Z}^{2}$ and a sequence $\{\lambda_{\mathbf{k}}\}_{\mathbf{k}\in \mathbb{J}},$ with the property that $\lambda_{\mathbf{k}} \neq \lambda_{\mathbf{k}'}$ for any $\mathbf{k}, \mathbf{k}' \in \mathbb{J}$ with $\mathbf{k} \neq \mathbf{k}'.$ Now we are able to state our first result.

\begin{thm}\label{crtI}
	Let $s\geq 0$ and assume $(H1)$ and $(H2).$ Suppose that 
  \begin{equation}\label{gammacon}
	\begin{split}
		\gamma&:=\inf_{\substack{\mathbf{k},
				\mathbf{k}'\in \mathbb{J}\\ \mathbf{k}\neq \mathbf{k}'}}|\lambda_{\mathbf{k}}- \lambda_{\mathbf{k}'}|>0
	\end{split}
\end{equation}
and define
\begin{equation}\label{gammalinhacond}
	\begin{split}
		\gamma'&:=\underset{S\subset \mathbb{J}}{\sup}\;
		\underset{\mathbf{k} \neq \mathbf{k}'}{\underset{ \mathbf{k}, \mathbf{k}' \in \mathbb{J}\backslash S}{\inf}}
		|\lambda_{\mathbf{k}}- \lambda_{\mathbf{k}'}|,	
	\end{split}
\end{equation}
where $S$ runs over all finite subsets of $\mathbb{J}.$ Then, for any $T>\frac{2 \pi}{\gamma'}$	 and for each $u_{0},u_{1}\in H_{p}^{s}(\mathbb{T}^{2})$ with $\widehat{u_{0}}(0,0)=\widehat{u_{1}}(0,0),$ there exists a function $h\in L^{2}([0,T];H^{s}_{p}(\mathbb{T}^{2}))$ such that the unique solution $u$ of the non-homogeneous system 
\begin{equation}\label{pcont1}
\begin{cases}
	u\in C\left([0,T];H^{s}_{p}(\mathbb{T}^{2})\right), & \\
	\partial_{t}u=\partial_{x}\mathcal{L}u+G(h)(t) \in H^{s-r}_{p}(\mathbb{T}^{2}),\;\; t\in(0,T), & \\
	u(0)=u_{0}. & 
\end{cases}
\end{equation}
satisfies $u(T)=u_{1}.$ Furthermore,
\begin{equation}\label{esth}
	\|h\|_{L^{2}([0,T];H_{p}^{s}(\mathbb{T}^{2}))} \leq \nu \left(\|u_{0}\|_{H^{s}_{p}(\mathbb{T}^{2})}+
\|u_{1}\|_{H^{s}_{p}(\mathbb{T}^{2})}	\right)
\end{equation}
for some positive constant $\nu \equiv \nu(s,g_{1},T).$
\end{thm}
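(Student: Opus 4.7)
The plan is to reduce the exact controllability statement to a classical moment problem via the spectral decomposition provided by $(H1)$. Expanding the data in the orthonormal basis $\{\psi_{\mathbf{k}}\}$, writing the mild solution of \eqref{pcont1} by Duhamel's formula, and projecting onto each $\psi_{\mathbf{k}}$, the terminal condition $u(T)=u_{1}$ becomes the family of scalar equations
\begin{equation*}
\int_{0}^{T} e^{-i\lambda_{\mathbf{k}} t}\,\bigl\langle G(h(\cdot,t)),\psi_{\mathbf{k}}\bigr\rangle_{L_{p}^{2}}\,dt \;=\; e^{-i\lambda_{\mathbf{k}} T}\,\widehat{u_{1}}(\mathbf{k}) - \widehat{u_{0}}(\mathbf{k}), \qquad \mathbf{k}\in\mathbb{Z}^{2}.
\end{equation*}
Hypothesis $(H2)$ controls the geometry of the level sets $I(\mathbf{k}')$: off the coordinate axes each $I(\mathbf{k}')$ has at most two points, $(\pm k_{1},k_{2})$, while on the axes the multiplicity may be countably infinite. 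I would therefore re-index the system using the maximal distinct set $\mathbb{J}$, grouping the equations sharing the same exponential $e^{i\lambda_{\mathbf{k}'} t}$ into a finite (off-axis) or axis-localized block.

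Next I would exploit the product structure of $G=g_{2}(y)G_{1}+g_{1}(x)G_{2}$ to make an ansatz of the form
\begin{equation*}
h(x,y,t) \;=\; \sum_{\mathbf{k}'\in\mathbb{J}} q_{\mathbf{k}'}(t)\,\Phi_{\mathbf{k}'}(x,y),
\end{equation*}
where $\Phi_{\mathbf{k}'}$ is a spatial profile tailored so that $\langle G(\Phi_{\mathbf{k}'}),\psi_{\mathbf{k}}\rangle$ is nonzero precisely when $\mathbf{k}\in I(\mathbf{k}')$ and yields an invertible local block there. The role of $G_{1}$ and $G_{2}$ is exactly to separate axis contributions: the projector $G_{1}$ annihilates the mean in $x$, so that the vertical-axis modes ($k_{1}=0$) are reached only through the $g_{1}(x)G_{2}$ piece, and symmetrically for the horizontal axis; this is what allows a rank-two (or rank-one) local system to determine the block even when the full multiplicity is infinite. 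The coefficients $q_{\mathbf{k}'}(t)$ are then taken to be a scalar multiple of a biorthogonal family to $\{e^{i\lambda_{\mathbf{k}'} t}\}_{\mathbf{k}'\in\mathbb{J}}$ in $L^{2}(0,T)$.

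The central analytic ingredient is the existence of such a biorthogonal family with uniform $L^{2}(0,T)$ bounds, which is guaranteed by classical Ingham / Beurling--Malliavin-type theorems under the asymptotic gap condition $T>2\pi/\gamma'$ (this parallels the construction in \cite{Vielma and Pastor}). Given a uniform bound $\|q_{\mathbf{k}'}\|_{L^{2}(0,T)}\leq C$ together with the uniform invertibility of the local blocks, summing Parseval-style in the weighted basis $\{\langle\mathbf{k}\rangle^{s}\psi_{\mathbf{k}}\}$ produces both the solvability in $H_{p}^{s}(\mathbb{T}^{2})$ and the bound \eqref{esth}. The main obstacle is precisely the axis-concentrated modes: because $i\lambda_{\mathbf{k}}$ has infinite multiplicity on the axes, one cannot invert those local systems by a naive projection onto a finite-dimensional space. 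The splitting of $G$ into an $x$-averaging and a $y$-averaging operator, combined with $(H2)$, is what reduces each axis block to a genuinely finite system; verifying the uniform-in-$\mathbf{k}'$ lower bound for these blocks and then transferring it through the biorthogonal family to obtain the global $H^{s}_{p}$ estimate is the delicate step, and controls the constant $\nu$ in \eqref{esth}.
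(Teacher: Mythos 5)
Your outline follows the paper's strategy: reduction to the moment equations, a biorthogonal family $\{q_{\mathbf{k}}\}_{\mathbf{k}\in\mathbb{J}}$ from Ingham's theorem under the gap condition $T>2\pi/\gamma'$, an ansatz $h=\sum q\cdot(\text{spatial profile})$, and uniform invertibility of local blocks to get \eqref{esth}. The role you assign to the two pieces $g_{2}(y)G_{1}$ and $g_{1}(x)G_{2}$ in reaching the two coordinate axes is also the correct mechanism.

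There is, however, a genuine error in your description of the level sets, and it breaks the ansatz. Hypothesis $(H2)$ does \emph{not} imply that $I(\mathbf{k}')$ has at most two points off the coordinate axes: it only pins down the solutions of $\lambda_{(k_{1},j_{2})}=\lambda_{(k_{1},k_{2})}$ in $j_{2}$ and of $\lambda_{(j_{1},k_{2})}=\lambda_{(k_{1},k_{2})}$ in $j_{1}$, i.e.\ along a fixed row or column of the lattice. The paper explicitly allows $m(\mathbf{k}')\to\infty$; in the 2D-BO example of Subsection \ref{BOexample} one has $\lambda_{\mathbf{k}}=|k_{1}|k_{2}$, so $(2,3),(3,2),(6,1),(1,6)$ and their mirrors all lie in the same off-axis level set. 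Consequently an ansatz with one profile $\Phi_{\mathbf{k}'}$ per element of $\mathbb{J}$, built from only two modes so as to yield a rank-two block, cannot satisfy the moment equations for the remaining modes of $I(\mathbf{k}')$: with $\Phi_{(2,3)}$ supported on the modes $(\pm 2,3)$, the equation for $\mathbf{k}=(3,2)$ degenerates to $0=2\pi\widehat{u_{1}}(3,2)$, since $m_{1}^{\pm 2,3}=m_{2}^{3,2}=0$. What actually saves the argument is not small multiplicity but sparsity of the coupling: by Lemma \ref{invertmatrix}, $m_{n}^{j,k}=\frac{1}{2\pi}\delta_{jk}$ for nonzero indices, so $\left(G\psi_{\mathbf{j}},\psi_{\mathbf{k}}\right)_{L^{2}}$ vanishes unless $j_{1}=k_{1}$ or $j_{2}=k_{2}$; combined with the biorthogonality (which kills $\lambda_{\mathbf{j}}\neq\lambda_{\mathbf{k}}$) and $(H2)$, the equation for each off-axis $\mathbf{k}$ involves only $h_{(k_{1},k_{2})}$ and $h_{(-k_{1},k_{2})}$, no matter how large $I(\mathbf{k})$ is. The paper therefore keeps a coefficient $h_{\mathbf{j}}$ for \emph{every} $\mathbf{j}\in\mathbb{Z}^{2}$ (all modes in a level set reusing the same $q$ via \eqref{defq}), solves the decoupled $2\times 2$ systems \eqref{h formg9} and \eqref{h formg12}, whose determinant is bounded below by $3/(4\pi^{2})$ uniformly in $k_{1}$, and handles the axis modes by the scalar formulas \eqref{h formg3} and \eqref{h formg6} — these are in fact the easy, fully decoupled cases, not the main obstacle as you suggest. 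Your outline needs this repair before the rest of it goes through.
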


Using symmetry, we can replace  $(H2)$ by the following hypothesis and still have a similar result of exact controllability for system \eqref{pcont}:
\vskip.2cm
\begin{itemize}
	\item [$(H3)$] For any $ \mathbf{k}=(k_{1},k_{2})\in \mathbb{Z}^{2},$ $\lambda_{\mathbf{k}}$ is odd in the first variable and even in the second one, that is,
	 $\lambda_{(-k_{1},k_{2})}=-\lambda_{(k_{1},k_{2})}$ and $\lambda_{(k_{1},k_{2})}=\lambda_{(k_{1},-k_{2})}.$ Furthermore, For any $\mathbf{k}=(k_{1},k_{2})\in \mathbb{Z}^{2},$ with $k_{1}\neq 0$  we have the unique entire solutions $j_{2} \in \mathbb{Z}$ of equation 
	$\lambda_{(k_{1},j_{2})}=\lambda_{(k_{1},k_{2})}$ are $j_{2}=\pm k_{2}.$ Moreover,
	 for any $\mathbf{k}= (k_{1},k_{2})\in \mathbb{Z}^{2},$ with  $k_{2}\neq 0$ the unique entire solution $j_{1} \in \mathbb{Z}$ of equation 
	$\lambda_{(j_{1},k_{2})}=\lambda_{(k_{1},k_{2})}$ is $j_{1}= k_{1}.$
\end{itemize}
\vskip.2cm
In this case, we denote by $\mathbb{I}$ the (maximal) subset of $\mathbb{Z}^{2}$ such that
the sequence $\{\lambda_{\mathbf{k}}\}_{\mathbf{k}\in \mathbb{I}}$ have the property  $\lambda_{\mathbf{k}} \neq \lambda_{\mathbf{k}'}$ for any $\mathbf{k}, \mathbf{k}' \in \mathbb{I}$ with $\mathbf{k} \neq \mathbf{k}'.$ We now estate our second result.

\begin{thm}\label{crtI1}
	Let $s\geq 0$ and assume $(H1)$ and $(H3).$ Suppose that 
	\begin{equation}\label{gammacon1}
		\begin{split}
			\gamma&:=\inf_{\substack{\mathbf{k}, \mathbf{k}'\in \mathbb{I}\\ \mathbf{k}\neq \mathbf{k}'}}|\lambda_{\mathbf{k}}- \lambda_{\mathbf{k}'}|>0
		\end{split}
	\end{equation}
	and define
	\begin{equation}\label{gammalinhacond1}
		\begin{split}
			\gamma'&:=\underset{S\subset \mathbb{I}}{\sup}\;
			\underset{\mathbf{k} \neq \mathbf{k}'}{\underset{ \mathbf{k}, \mathbf{k}' \in \mathbb{I}\backslash S}{\inf}}
			|\lambda_{\mathbf{k}} -\lambda_{\mathbf{k}'}|,	
		\end{split}
	\end{equation}
	where $S$ runs over all finite subsets of $\mathbb{I}.$ Then, for any $T>\frac{2 \pi}{\gamma'}$	the same conclusions of Theorem \ref{crtI} hold.
\end{thm}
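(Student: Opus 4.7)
The plan is to reprove Theorem \ref{crtI} with the roles of the two coordinates, and correspondingly of the two summands of the control operator
\begin{equation*}
G(h)=g_{2}(y)\,G_{1}(h)+g_{1}(x)\,G_{2}(h),
\end{equation*}
interchanged. Indeed, (H3) is obtained from (H2) by swapping the roles of $k_{1}$ and $k_{2}$, while $G$ is itself symmetric in $(x,y)$, so the moment-method scheme of Theorem \ref{crtI} should transfer provided that the piece of $G$ previously used to resolve the $\pm k_{1}$ degeneracy is now used to resolve the $\pm k_{2}$ degeneracy. Since the equation $\partial_{t}u=\partial_{x}\mathcal{L}u$ is not invariant under $(x,y)\mapsto(y,x)$, a direct change of variables does not reduce \eqref{crtI1} to \eqref{crtI}; the proof has to be redone with the summands of $G$ playing swapped roles.

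First, expanding in the orthonormal Fourier basis $\{\psi_{\mathbf{k}}\}_{\mathbf{k}\in\mathbb{Z}^{2}}$ and using (H1), Duhamel's formula reduces the control objective $u(T)=u_{1}$ to the trigonometric moment problem
\begin{equation*}
\int_{0}^{T} e^{-i\lambda_{\mathbf{k}}t}\,\bigl\langle G(h)(t),\psi_{\mathbf{k}}\bigr\rangle_{L^{2}_{p}(\mathbb{T}^{2})}\,dt=\widehat{u_{1}}(\mathbf{k})-e^{-i\lambda_{\mathbf{k}}T}\widehat{u_{0}}(\mathbf{k}),\qquad \mathbf{k}\in\mathbb{Z}^{2}.
\end{equation*}
Computing $\langle G(h),\psi_{\mathbf{k}}\rangle$ via the self-adjointness of $G$ and the explicit form of $G_{1},G_{2}$ splits each moment into two pieces: one involving the $x$-Fourier profile of $h$ tested against $g_{2}$, and the other involving its $y$-Fourier profile tested against $g_{1}$.

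Second, I exploit (H3) to decouple the moment system on each spectral block $I(\mathbf{k})$. Under (H3), for $k_{1},k_{2}\neq 0$ the only index distinct from $\mathbf{k}$ with the same eigenvalue is $(k_{1},-k_{2})$, giving multiplicity two, which is the mirror image of the $(\pm k_{1},k_{2})$ degeneracy handled in Theorem \ref{crtI}. The summand $g_{2}(y)G_{1}(h)$ retains arbitrary $y$-Fourier content of $h$ (it only removes the $x$-mean), so its pairing with $\psi_{(k_{1},k_{2})}$ and $\psi_{(k_{1},-k_{2})}$ depends on two independent $y$-Fourier coefficients of $h$, and this decouples the degenerate pair of moments. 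Combined with the Haraux--Kahane generalization of Ingham's inequality, which yields for $T>2\pi/\gamma'$ a biorthogonal family $\{\theta_{\mathbf{k}}\}\subset L^{2}(0,T)$ to $\{e^{i\lambda_{\mathbf{k}}t}\}_{\mathbf{k}\in\mathbb{I}}$ with uniform $L^{2}$-bounds depending only on $T$ and $\gamma'$, this produces an explicit block-by-block expression for $h$; summing the blocks gives $h\in L^{2}([0,T];H^{s}_{p}(\mathbb{T}^{2}))$, and \eqref{esth} follows from the orthonormality of $\{\psi_{\mathbf{k}}\}$, the uniform biorthogonal bounds, and the boundedness \eqref{opl} of $G$.

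The main obstacle, as already present in the proof of Theorem \ref{crtI}, is the modes on the coordinate axes $\{k_{1}=0\}\cup\{k_{2}=0\}$, where (H3) still permits $m(\mathbf{k})=\infty$. Exactly as in the (H2) case, the two summands of $G$ together provide enough independent test-function directions to reduce the moment problem on each axis to a well-posed system that can be solved separately; the only substantive change is that the roles of $g_{1}(x)G_{2}$ and $g_{2}(y)G_{1}$ on the two axes are interchanged. Beyond this bookkeeping I expect no new analytic difficulty, and the same Ingham-type estimates and biorthogonal construction carry the proof through.
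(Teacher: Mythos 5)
Your proposal is correct and follows essentially the same route as the paper, which itself only remarks that the proof of Theorem \ref{crtI} carries over ``with minor modifications'': under $(H3)$ the off-axis degenerate pair becomes $(k_{1},\pm k_{2})$, the $2\times 2$ moment system is built from $\widehat{g_{2}}(\pm 2k_{2})$ in place of $\widehat{g_{1}}(\pm 2k_{1})$, and it is invertible by the same bound $|\widehat{g_{2}}(2k_{2})|\leq \tfrac{1}{2\pi}<\tfrac{1}{\pi}$. One small correction: on the coordinate axes the active summand of $G$ is dictated by $m_{n}^{j,0}=0$ and is therefore the same under $(H2)$ and $(H3)$ (no interchange occurs there); the swap of roles only affects the off-axis blocks, so this does not harm your argument.
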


The idea to prove Theorems \ref{crtI} and \ref{crtI1} is to use the moment method (see, for instance, \cite{Russell, Vielma and Pastor}). Combined with a generalization of Ingham's theorem  (see \cite{7}), the construction of the function $h$ reduces in analyzing the solutions of an algebraic equation or system of equations.

\begin{rem}
	Note if $\lambda_{\mathbf{k}}\in\mathbb{Z}$, for all $\mathbf{k}\in\mathbb{J}$ (or $\mathbb{I}$), then we always have $\gamma,\gamma'\geq1$. This situation occurs, for instance, when $\mathcal{L}$ is a differential operator. See Section \ref{appsec} for some examples.
\end{rem}

Attention is now turned to our stabilization result.   Choosing an appropriate linear bounded operator $K$ one is able to show that  the resulting closed-loop system is exponentially stable with an arbitrary exponential decay rate. More precisely,

\begin{thm}\label{estabilization}
	Let  $g_{1},\;g_{2}$ be as in \eqref{med1}-\eqref{med2}  and let  $s\geq 0,$ and $\lambda>0$ be given. Under the assumptions of Theorem \ref{crtI}  or  Theorem \ref{crtI1}, there exists a bounded linear  operator $K_{\lambda}$  from $H_{p}^{s}(\mathbb{T}^{2})$ to $H_{p}^{s}(\mathbb{T}^{2})$ such that
	the unique solution $u$ of the closed-loop system
	\begin{equation}\label{estag}
		\begin{cases}
			u\in C([0,+\infty);H^{s}_{p}(\mathbb{T}^{2})), \hbox{}\\
			\partial_{t}u(t)= \partial_{x}\mathcal{L}u(t)+ GK_{\lambda}u(t)\in H_{p}^{s-r}(\mathbb{T}^{2}), \quad t> 0,\\
			u(0)=u_{0}\in H_{p}^{s}(\mathbb{T}^{2}), 
		\end{cases}
	\end{equation}		
	satisfies
	$$\|u(\cdot,t)-\widehat{u_{0}}(0,0)\|_{H_{p}^{s}(\mathbb{T}^{2})}\leq
	Me^{-\lambda t}\|u_{0}-\widehat{u_{0}}(0,0)\|_{H_{p}^{s}(\mathbb{T}^{2})},$$
	for all $t\geq0,$ and some positive constant $M=M(g_{1},g_{2},\lambda, s).$
\end{thm}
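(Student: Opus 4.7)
The plan is to construct $K_{\lambda}$ via a Slemrod-type weighted-Gramian feedback, following the template developed in the one-dimensional case in \cite{Vielma and Pastor}. First I would reduce to the zero-mean subspace $H^{s}_{0}:=\{\phi\in H^{s}_{p}(\mathbb{T}^{2}):\widehat{\phi}(0,0)=0\}$: since the symbol of $\partial_{x}\mathcal{L}$ vanishes at $\mathbf{k}=(0,0)$ and $G$ maps $H^{s}_{p}$ into $H^{s}_{0}$ by \eqref{med1}--\eqref{med2}, the mean $c:=\widehat{u_{0}}(0,0)$ is conserved by the closed-loop dynamics provided $K_{\lambda}$ is chosen to annihilate constants, and the estimate of the theorem becomes an analogous estimate for $v:=u-c$ on $H^{s}_{0}$.

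Next, by hypothesis (H1) the operator $A:=\partial_{x}\mathcal{L}$ is skew-adjoint on $H^{s}_{0}$ (diagonal in $\{\psi_{\mathbf{k}}\}$ with purely imaginary eigenvalues $i\lambda_{\mathbf{k}}$), hence generates a unitary group $S(t)=e^{tA}$ on $H^{s}_{0}$. The exact controllability statement of Theorem \ref{crtI}/\ref{crtI1} in some fixed time $T_{0}>2\pi/\gamma'$ is equivalent, via the Hilbert Uniqueness Method, to the observability inequality
$$\int_{0}^{T_{0}}\|G^{*}S(-t)\phi\|_{H^{s}_{p}}^{2}\,dt\,\geq\,C_{0}\,\|\phi\|_{H^{s}_{p}}^{2},\qquad \phi\in H^{s}_{0},$$
which is precisely what the moment-method/Ingham argument used to prove those theorems delivers; here $G^{*}$ denotes the adjoint of $G$ with respect to the $H^{s}_{p}$ inner product.

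To prescribe the rate $\lambda$, I would then introduce the weighted Gramian
$$\Lambda_{\lambda}:=\int_{0}^{T_{0}}e^{-2\lambda t}\,S(t)\,GG^{*}\,S(-t)\,dt,$$
a bounded, positive, self-adjoint operator on $H^{s}_{0}$. Unitarity of $S(t)$ combined with the observability inequality yields $\langle\Lambda_{\lambda}\phi,\phi\rangle\geq e^{-2\lambda T_{0}}C_{0}\|\phi\|^{2}$, so $\Lambda_{\lambda}$ is a topological isomorphism of $H^{s}_{0}$. Setting
$$K_{\lambda}\phi:=-G^{*}\Lambda_{\lambda}^{-1}\bigl(\phi-\widehat{\phi}(0,0)\bigr),$$
one obtains a bounded operator on $H^{s}_{p}(\mathbb{T}^{2})$ (by \eqref{opl}) which annihilates constants, and global well-posedness of \eqref{estag} follows because $A+GK_{\lambda}$ is a bounded perturbation of the group generator $A$.

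The decay is then proved through the Lyapunov functional $V(v):=\langle\Lambda_{\lambda}^{-1}v,v\rangle_{H^{s}_{p}}$ on $H^{s}_{0}$: differentiating the integrand defining $\Lambda_{\lambda}$ and integrating by parts in $t$ produces the algebraic identity
$$A\Lambda_{\lambda}-\Lambda_{\lambda}A-2\lambda\Lambda_{\lambda}\,=\,e^{-2\lambda T_{0}}S(T_{0})GG^{*}S(-T_{0})-GG^{*},$$
and a short computation along trajectories of \eqref{estag}, using the skew-adjointness of $A$, yields $\dot V\leq-2\lambda V$. Gronwall combined with the two-sided equivalence $V\asymp\|v\|_{H^{s}_{p}}^{2}$ supplied by the coercivity and boundedness of $\Lambda_{\lambda}$ then delivers the desired decay with constant $M=M(g_{1},g_{2},\lambda,s)$. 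The main obstacle I anticipate is extracting the quantitative observability inequality above from the moment-method/Ingham proof of Theorems \ref{crtI}/\ref{crtI1} in the $H^{s}_{p}$ norm, with a constant uniform in the relevant Fourier indices; once that is in hand, the remainder of the Slemrod construction runs in close parallel to the one-dimensional treatment in \cite{Vielma and Pastor}.
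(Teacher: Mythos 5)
Your proposal follows essentially the same route as the paper: the authors also invoke the observability inequality obtained from exact controllability via HUM (their Corollary \ref{Observabi}) and then apply Slemrod's weighted-Gramian construction, defining $K_{\lambda}=-G^{\ast}D_{T,\lambda}^{-1}$ with $D_{T,\lambda}\phi=\int_{0}^{T}e^{-2\lambda\tau}U(-\tau)GG^{\ast}U(-\tau)^{\ast}\phi\,d\tau$; you simply write out the Lyapunov computation that the paper delegates to the references of Slemrod and Liu. One caveat: your Gramian is oriented as $\int_{0}^{T_{0}}e^{-2\lambda t}S(t)GG^{\ast}S(-t)\,dt$, for which your commutator identity gives $A\Lambda_{\lambda}-\Lambda_{\lambda}A=2\lambda\Lambda_{\lambda}+e^{-2\lambda T_{0}}S(T_{0})GG^{\ast}S(-T_{0})-GG^{\ast}$ and hence $\dot V=+2\lambda V+e^{-2\lambda T_{0}}\|G^{\ast}S(-T_{0})w\|^{2}-3\|G^{\ast}w\|^{2}$, which does not yield $\dot V\leq-2\lambda V$; replacing $S(t)GG^{\ast}S(-t)$ by $S(-t)GG^{\ast}S(t)$ (the paper's $U(-\tau)GG^{\ast}U(-\tau)^{\ast}$) flips the sign of the commutator term and the standard decay estimate then goes through.
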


\begin{rem}
	Using a simple feedback control law $Ku=-G^{\ast},$ where $G^\ast$ denotes the adjoint operator of $G$, we can prove that the closed-loop system \eqref{stabp} is exponentially stable in $L^{2}_{p}(\mathbb{T}^{2})$ for some exponential decay rate by using similar arguments as in \cite[Theorem 5.4]{Manhendra and Francisco} (see also \cite{Linares Ortega, Russell and Zhang}).
\end{rem}

The paper is organized as follows. In Section \ref{preliminares} we introduce the basic notation and review some definitions related to periodic functions. In Section \ref{wellp} we just prove the well-posedness of our associated IVPs. Theorems \ref{crtI} and \ref{crtI1} is then proved in Section \ref{exactcontrol}. Section \ref{stabsec} is dedicated to prove the stabilization result. Finally, in Section \ref{appsec} we apply our results to prove the exact controllability and exponential stabilization for some well known models.

\section{Preliminaries}\label{preliminares}
In this section we introduce some basic notation and summarize some important results related with the theory of distributions on the two-dimensional torus (2-torus, for short). We will use $\mathbf{k}$ and $\mathbf{j}$ for generic points $(k_1,k_2)$ and $(j_1,j_2)$ in $\mathbb{Z}^2$. For  multi-indices  
 $\boldsymbol{\alpha}=(\alpha_{1},\alpha_{2})$ and
 $\boldsymbol{\beta}=(\beta_{1},\beta_{2})$ in $\mathbb{N}^{2}$
we say that $\boldsymbol{\beta} \leq \boldsymbol{ \alpha }$ if and only if $ \beta_{i} \leq \alpha_{i} ,\;i=1,2.$ Also, we define
$\boldsymbol{\alpha}!=\alpha_{1}!\alpha_{2}!$ and $|\boldsymbol{\alpha}|=\alpha_{1}+\alpha_{2}.$ Given two vectors $\mathbf{x}=(x_{1},y_{1})$ and $\mathbf{x}'= (x_{2},y_{2})$ in  $\mathbb{R}^{2},$
$\mathbf{x}\cdot \mathbf{x}'=x_{1}x_{2}+y_{1}y_{2},$
denotes the usual inner product. Also, $|\mathbf{x}|$ denotes the usual Euclidean norm of $\mathbf{x}.$

\subsection{Distributions on the 2-torus} Here we recall some aspects of the Fourier analysis on the torus as well as some properties of the periodic distributions. The details may be found in \cite[Chapter 3]{Loukas Grafakos}.
The space of test functions on the 2-torus is the space $C^{\infty}(\mathbb{T}^{2})$ of all $C^{\infty}$ functions that are $2\pi$-periodic in every coordinate. The topology generated by the family of semi-norms $\sup|D^\mathbf{\boldsymbol{\alpha}}f(\mathbf{x})|$ allows one to see $C^\infty(\mathbb{T}^2)$ as a locally convex topological space.
We denote by $\mathcal{G}(\mathbb{Z}^{2})$
the space of the \emph{rapidly decreasing sequences on $\mathbb{Z}^{2}$} of all complex-valued sequences $\mu=\left\{\mu_{ \mathbf{k} }\right\}_{\mathbf{k}\in \mathbb{Z}^{2}}$ such that 
\begin{equation}\label{rde}
	\sum_{\mathbf{k}\in \mathbb{Z}^{2}}|\mathbf{k}^{\boldsymbol{\alpha}} |\;|\mu_{\mathbf{k}}|<\infty,\;\;\text{for all multi-index}\; \boldsymbol{\alpha} \in \mathbb{N}^{2}.
\end{equation}
Recall that $\mathbf{k}^{\boldsymbol{\alpha}}=(k_1,k_2)^{(\alpha_{1},\alpha_{2})}=k_1^{\alpha_{1}}k_2^{\alpha_{2}}$.
The space $\mathcal{G}(\mathbb{Z}^{2})$ is a Hausdorff locally convex topological space with the topology induced by the family of semi-norms 
$$
\|\mu\|_{\infty,\boldsymbol{\alpha}}:=\sup\limits_{\mathbf{k}\in \mathbb{Z}^{2}}\left(|\mathbf{k}^{ \boldsymbol{\alpha} }|\;|\mu_{\mathbf{k}}|\right),
$$
where $ \boldsymbol{\alpha} $ ranges over all multi-indices in $\mathbb{N}^{2}.$  The dual spaces of $C^{\infty}(\mathbb{T}^{2})$ and $\mathcal{G}(\mathbb{Z}^{2})$ under these topologies are denoted by $\mathcal{D}'(\mathbb{T}^{2})$ (the space of all \emph{distributions on $\mathbb{T}^{2}$}) and 
$\mathcal{G}'(\mathbb{Z}^{2})$ (the space of \emph{sequences with slow growth on $\mathbb{Z}^{2}$}), respectively. $C^{\infty}(\mathbb{T}^{2})$ is dense in $\mathcal{D}'(\mathbb{T}^{2})$ and we can define the usual operations of differentiation, translation, reflection, convolution and multiplication.
The Fourier transform of $u \in \mathcal{D}'(\mathbb{T}^{2})$ is the sequence $\left\{ \widehat{u}(\mathbf{k})\right\}_{\mathbf{k}\in \mathbb{Z}^{2}}$ defined as
	\begin{equation}\label{Ft}
	\widehat{u}(\mathbf{k})=\frac{1}{(2\pi)^{2}} \left\langle u, e^{-i\mathbf{k}\cdot \mathbf{x}}\right\rangle,\;\;\mathbf{k}=(k_{1},k_{2})\in \mathbb{Z}^{2},\;\;\mathbf{x}=(x,y)\in \mathbb{T}^{2},
	\end{equation}
where $\left\langle \cdot, \cdot \right\rangle$ denotes the pairing between 
$\mathcal{D}'(\mathbb{T}^{2})$ and $C^{\infty}(\mathbb{T}^{2}).$ 
The map $^{\wedge}:\mathcal{D}'(\mathbb{T}^{2}) \to \mathcal{G}'(\mathbb{Z}^{2})$ is a linear bijection with inverse $^{\vee}: \mathcal{G}'(\mathbb{Z}^{2}) \to \mathcal{D}'(\mathbb{T}^{2})$ (the inverse Fourier transform) defined by
$$\eta = \left\{\eta_{\mathbf{k}}\right\}_{\mathbf{k}\in \mathbb{Z}^{2}} \mapsto \eta^{\vee}(\mathbf{x}):=\sum\limits_{\mathbf{k}\in \mathbb{Z}^{2}}\eta_{\mathbf{k}}\;e^{i\mathbf{k}\cdot\mathbf{x}},\;\;\forall\; \mathbf{x}=(x,y)\in \mathbb{T}^{2},$$
where the series converges in the sense of 
$ \mathcal{D}'(\mathbb{T}^{2})$. 

Recall that $L^{2}_{p}(\mathbb{T}^{2})$ (the standard space of square integrable $2\pi$-periodic functions) with complex inner product 
$$\left(u,v\right)_{L^{2}_p(\mathbb{T}^{2})}= \int\limits_{\mathbb{T}^{2}} u(x,y) \overline{v(x,y)}dxdy$$
is a Hilbert space.
If $u\in L^2_p(\mathbb{T}^2)$  then
	\begin{equation*}
	\widehat{u}(\mathbf{k})=\frac{1}{(2\pi)^{2}} \int_{\mathbb{T}^{2}} e^{-i\mathbf{k}\cdot \mathbf{x}}u(x,y)dxdy.
\end{equation*}

\subsection{Sobolev spaces and Fourier series}(See \cite[Chapter 3 $\&$ 4]{Michael Taylor}) In this subsection we will introduce the so-called Sobolev spaces of $L^{2}$-type on the 2-torus.
Given $s\in\Real$, the (periodic) \textit{Sobolev space of order $s$} is defined as 
$$H^{s}_{p}(\mathbb{T}^{2}):=\left \{
\displaystyle{u=\sum_{\mathbf{k}\in \mathbb{Z}^{2}} \widehat{u}(\mathbf{k}) \;e^{i\mathbf{k}\cdot\mathbf{x}}\in \mathcal{D}'(\mathbb{T}^{2}) } \Bigl| \|u\|_{H^{s}_{p}(\mathbb{T})}^{2}:=(2\pi)^{2}\sum_{\mathbf{k} \in \mathbb{Z}^{2}}
(1+|\mathbf{k}|)^{2s}|\widehat{u}(\mathbf{k})|^{2}<\infty \right\}.$$
The space $H^{s}_{p}(\mathbb{T}^{2}) $ is a  Hilbert space endowed
with the inner product
\begin{equation*}
	\displaystyle{(u\,, \,v)_{H^{s}_{p}(\mathbb{T}^{2})}= (2\pi)^{2} \sum_{\mathbf{k}\in \mathbb{Z}^{2}}
		(1+|\mathbf{k}|)^{2s}\widehat{u}(\mathbf{k})\;\overline{\widehat{v}(\mathbf{k})},\;\;v\in H^{s}_p(\mathbb{T}^{2}), \;u\in H^{s}_p(\mathbb{T}^{2}).}
\end{equation*}
For any $s\in \Real$, $(H^{s}_{p}(\mathbb{T}^{2}))'$, the topological dual of $H^{s}_{p}(\mathbb{T}^{2})$, is isometrically isomorphic to $H^{-s}_{p}(\mathbb{T}^{2})$, where the duality is implemented by the pairing
$$
\displaystyle{\langle u, v\rangle_{H^{-s}_{p}(\mathbb{T}^{2})\times H_{p}^{s}(\mathbb{T}^{2})}= (2\pi)^{2} \sum_{\mathbf{k}\in \mathbb{Z}^{2}}
	\widehat{u}(\mathbf{k})\;\overline{\widehat{v}(\mathbf{k})}},\;\;\text{for all}\; v \in H^{s}_{p}(\mathbb{T}^{2}),\;u\in H^{-s}_{p}(\mathbb{T}^{2}).
$$
If $s_{1}, \;s_{2}\in \mathbb{R}$ with $s_{1}\geq s_{2}$ then
$H_{p}^{s_{1}}(\mathbb{T}^{2}) \hookrightarrow H_{p}^{s_{2}}(\mathbb{T}^{2}),$ where the  embedding is dense. Also $H_{p}^{0}(\mathbb{T}^{2})$ is isometrically isomorphic to $L^{2}_{p}(\mathbb{T}^{2})$.

	It is well-known that any distribution $u\in H^{s}_{p}(\mathbb{T}^{2})$, $s\in\Real$, may be written as
	\begin{equation*}
		u=2\pi\sum_{\mathbf{k}\in\mathbb{Z}^{2}} \widehat{u}(\mathbf{k})\psi_{\mathbf{k}}= \lim\limits_{l \to \infty} S_{l}(u),
	\end{equation*}
	where the limit is taken in the sense of $\mathcal{D}'(\mathbb{T}^{2}),$ $S_{l}(u)$ 	is the $l$-th partial sum of the Fourier series associated to $u,$ defined by
		$$S_{l}(u)(\mathbf{x}) :=2\pi \sum\limits_{\substack{\mathbf{k}=(k_{1},k_{2}) \in \mathbb{Z}^{2}\\  |\mathbf{k}|\leq l}}  \widehat{u}(\mathbf{k}) \psi_{ \mathbf{k} },$$
and  $\left\{\psi_{\mathbf{k}}\right\}_{\mathbf{k}\in \mathbb{Z}^{2}}$ is a complete orthonormal  sequence in $L^{2}_{p}(\mathbb{T}^{2})$ (see \cite[Chapter 3 $\&$ 3.1]{Loukas Grafakos}) formed by  the complex-valued functions
	\begin{equation}\label{orto}
		\psi_{\mathbf{k}}(\mathbf{x})=\frac{1}{2\pi} e^{i\mathbf{k}\cdot\mathbf{x}},\;\;\mathbf{k}\in \mathbb{Z}^{2},\;\mathbf{x}=(x,y)\in \mathbb{T}^{2}.
	\end{equation}

\section{Well-posedness}\label{wellp}

Before establishing our main result concerning the exact controllability a we need a well-posedness theory associated with \eqref{FEQ}.
The following result states the well-posedness  with initial data in  $ H^{s}_{p}(\mathbb{T}^{2}),$ $s\in \mathbb{R}$.

\begin{thm}\label{wellpose}
Assume  $\mathcal{L}$ satisfies \eqref{symbol}-\eqref{Scont}.
Then for any and $u_{0}\in H_{p}^{s}(\mathbb{T}^{2})$, $s\in\Real$, the IVP
\begin{equation}\label{weelph2}
	\begin{cases}
		u\in C(\mathbb{R};H_{p}^{s}(\mathbb{T}^{2})), & \\
		\partial_{t}u=\partial_{x}\mathcal{L}u \in H^{s-r}_{p}(\mathbb{T}^{2}),\;\;\;t\in \mathbb{R},& \\
		u(0)=u_{0},
	\end{cases}
\end{equation}
has a unique solution.
\end{thm}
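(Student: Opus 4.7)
The plan is to solve the equation explicitly via Fourier series, since $\mathcal{L}$ is a Fourier multiplier and so is $\partial_{x}$. Write formally
\begin{equation*}
u(x,y,t)=2\pi\sum_{\mathbf{k}\in\mathbb{Z}^{2}} \widehat{u}(\mathbf{k},t)\,\psi_{\mathbf{k}}(x,y).
\end{equation*}
Taking the (periodic) Fourier transform of \eqref{weelph2} turns the PDE into the family of decoupled ODEs
\begin{equation*}
\frac{d}{dt}\widehat{u}(\mathbf{k},t)=ik_{1}b(\mathbf{k})\widehat{u}(\mathbf{k},t)=i\lambda_{\mathbf{k}}\widehat{u}(\mathbf{k},t),\qquad \widehat{u}(\mathbf{k},0)=\widehat{u_{0}}(\mathbf{k}),
\end{equation*}
one for each $\mathbf{k}\in\mathbb{Z}^{2}$. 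These are solved immediately by $\widehat{u}(\mathbf{k},t)=e^{it\lambda_{\mathbf{k}}}\widehat{u_{0}}(\mathbf{k})$, giving the candidate solution
\begin{equation*}
u(t):=2\pi\sum_{\mathbf{k}\in\mathbb{Z}^{2}} e^{it\lambda_{\mathbf{k}}}\widehat{u_{0}}(\mathbf{k})\,\psi_{\mathbf{k}}.
\end{equation*}

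Next, I would verify that $u(t)\in H^{s}_{p}(\mathbb{T}^{2})$ with the expected regularity. Since $\lambda_{\mathbf{k}}=k_{1}b(\mathbf{k})\in\mathbb{R}$ by \eqref{symbol}, one has $|e^{it\lambda_{\mathbf{k}}}|=1$, whence by Parseval
\begin{equation*}
\|u(t)\|_{H^{s}_{p}(\mathbb{T}^{2})}^{2}=(2\pi)^{2}\sum_{\mathbf{k}\in\mathbb{Z}^{2}}(1+|\mathbf{k}|)^{2s}|\widehat{u_{0}}(\mathbf{k})|^{2}=\|u_{0}\|_{H^{s}_{p}(\mathbb{T}^{2})}^{2},
\end{equation*}
so the series converges in $H^{s}_{p}(\mathbb{T}^{2})$ and the $H^{s}_{p}$-norm is conserved. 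For continuity in time, given $t_{n}\to t$, the difference
\begin{equation*}
\|u(t_{n})-u(t)\|_{H^{s}_{p}(\mathbb{T}^{2})}^{2}=(2\pi)^{2}\sum_{\mathbf{k}\in\mathbb{Z}^{2}}(1+|\mathbf{k}|)^{2s}\bigl|e^{it_{n}\lambda_{\mathbf{k}}}-e^{it\lambda_{\mathbf{k}}}\bigr|^{2}|\widehat{u_{0}}(\mathbf{k})|^{2}
\end{equation*}
tends to zero by dominated convergence, each summand being bounded by $4(1+|\mathbf{k}|)^{2s}|\widehat{u_{0}}(\mathbf{k})|^{2}\in \ell^{1}(\mathbb{Z}^{2})$. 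This yields $u\in C(\mathbb{R};H^{s}_{p}(\mathbb{T}^{2}))$.

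For the equation itself, the growth bound \eqref{Scont} gives $|\lambda_{\mathbf{k}}|=|k_{1}b(\mathbf{k})|\leq C|\mathbf{k}|^{r}$ for $|\mathbf{k}|\geq N_{0}$, so the term-by-term differentiated series
\begin{equation*}
\partial_{t}u(t)=2\pi\sum_{\mathbf{k}\in\mathbb{Z}^{2}} i\lambda_{\mathbf{k}}e^{it\lambda_{\mathbf{k}}}\widehat{u_{0}}(\mathbf{k})\,\psi_{\mathbf{k}}
\end{equation*}
satisfies
\begin{equation*}
\|\partial_{t}u(t)\|_{H^{s-r}_{p}(\mathbb{T}^{2})}^{2}\leq C\sum_{\mathbf{k}\in\mathbb{Z}^{2}}(1+|\mathbf{k}|)^{2(s-r)}|\mathbf{k}|^{2r}|\widehat{u_{0}}(\mathbf{k})|^{2}\lesssim \|u_{0}\|_{H^{s}_{p}(\mathbb{T}^{2})}^{2},
\end{equation*}
so $\partial_{t}u\in C(\mathbb{R};H^{s-r}_{p}(\mathbb{T}^{2}))$, and the Fourier side identity $i\lambda_{\mathbf{k}}=\widehat{\partial_{x}\mathcal{L}}(\mathbf{k})$ gives $\partial_{t}u=\partial_{x}\mathcal{L}u$ as an equality in $H^{s-r}_{p}(\mathbb{T}^{2})$.

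Finally, uniqueness is immediate on the Fourier side: if $v\in C(\mathbb{R};H^{s}_{p}(\mathbb{T}^{2}))$ solves \eqref{weelph2} with $u_{0}=0$, then for each fixed $\mathbf{k}$ the Fourier coefficient $\widehat{v}(\mathbf{k},\cdot)$ is continuous and solves the scalar ODE $\frac{d}{dt}\widehat{v}(\mathbf{k},t)=i\lambda_{\mathbf{k}}\widehat{v}(\mathbf{k},t)$ with $\widehat{v}(\mathbf{k},0)=0$, so $\widehat{v}(\mathbf{k},t)\equiv 0$ and hence $v\equiv 0$. There is no serious obstacle here; the only point requiring care is the loss of $r$ derivatives in time, which is built into the statement and is controlled precisely by the symbol bound \eqref{Scont}. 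Equivalently, one can phrase the construction as saying that the skew-adjoint Fourier multiplier $\partial_{x}\mathcal{L}$ generates a unitary group $\{e^{t\partial_{x}\mathcal{L}}\}_{t\in\mathbb{R}}$ on every $H^{s}_{p}(\mathbb{T}^{2})$ via Stone's theorem, and $u(t)=e^{t\partial_{x}\mathcal{L}}u_{0}$ is the unique solution.
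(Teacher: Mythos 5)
Your proof is correct, but it takes a different route from the paper. The paper's proof is abstract: it regards $\partial_{x}\mathcal{L}$ as a densely defined skew-adjoint operator on $H^{s-r}_{p}(\mathbb{T}^{2})$ with domain $H^{s}_{p}(\mathbb{T}^{2})$, invokes Stone's theorem to obtain the unitary group $\{U(t)\}_{t\in\mathbb{R}}$, and then cites Theorem 3.2.3 of Cazenave--Haraux for existence and uniqueness; the explicit Fourier representation $\widehat{u(t)}(\mathbf{k})=e^{ik_{1}b(\mathbf{k})t}\widehat{u_{0}}(\mathbf{k})$ is only recorded afterwards in a remark. You instead build that representation directly, diagonalizing the equation into the decoupled scalar ODEs $\frac{d}{dt}\widehat{u}(\mathbf{k},t)=i\lambda_{\mathbf{k}}\widehat{u}(\mathbf{k},t)$ and checking norm conservation, time continuity, and uniqueness by hand via Parseval and dominated convergence --- essentially an elementary, self-contained proof of the special case of Stone's theorem needed here, and you note the equivalence with the semigroup formulation at the end. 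What your approach buys is transparency and the solution formula up front; what the paper's buys is brevity and the immediate availability of the group $U(t)$ and its adjoint $U(t)^{*}=U(-t)$ for the later control arguments. Two small points you could tighten: (i) to conclude $\partial_{t}u=\partial_{x}\mathcal{L}u$ you should check that the difference quotients $h^{-1}(u(t+h)-u(t))$ converge in $H^{s-r}_{p}(\mathbb{T}^{2})$ to the term-by-term differentiated series, not merely that the latter converges --- this follows from the same dominated convergence argument using $\bigl|h^{-1}(e^{ih\lambda_{\mathbf{k}}}-1)-i\lambda_{\mathbf{k}}\bigr|\leq 2|\lambda_{\mathbf{k}}|$; and (ii) in the uniqueness step you implicitly use that taking the $\mathbf{k}$-th Fourier coefficient commutes with $\partial_{t}$, which holds because the coefficient functional is continuous on $H^{s-r}_{p}(\mathbb{T}^{2})$. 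Neither is a real gap.
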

\begin{proof}
This follows from the standard semigroup theory. For the sake of completeness we bring some steps  (see for e.g. \cite{Cazenave and Haraux,  Iorio and Maga, Pazy} for more details). We consider $\partial_{x}\mathcal{L}:D(\partial_{x}\mathcal{L})\subseteq H^{s-r}(\mathbb{T}^{2}) \to H^{s-r}(\mathbb{T}^{2})$, where $D(\partial_{x}\mathcal{L})=H^{s}(\mathbb{T}^{2})$. Note that $D(\partial_{x}\mathcal{L})$ is dense in $H^{s-r}(\mathbb{T}^{2})$ and, using the definition of $\partial_{x}\mathcal{L}$ and the properties of the Fourier transform, we see that it is skew-adjoint, that is, for any $\varphi,\psi\in H^{s}(\mathbb{T}^{2})$,
\[
		\begin{split}
			\left(\partial_{x}\mathcal{L}\varphi, \psi\right)_{H^{s-r}(\mathbb{T}^{2})}=	-\left(\varphi, \partial_{x} \mathcal{L}\psi\right)_{H^{s-r}(\mathbb{T}^{2})}.
	\end{split}
\]
	Hence, by Stone's theorem it follows that $\partial_{x} \mathcal{L}$ generates a strongly continuous unitary group of contractions, say, $\left\{ U(t)\right\}_{t\in \mathbb{R}}$ on $H^{s-r}(\mathbb{T}^{2})$.  Theorem 3.2.3 in \cite{Cazenave and Haraux} now implies the desired result.
\end{proof}

\begin{rem}
Using the Fourier transform, we may deduce that the unique solution given in Theorem \ref{wellpose} satisfies
\begin{equation*}
	\widehat{u(t)}(\mathbf{k})=e^{ik_{1}b(\mathbf{k})t} \widehat{u_{0}}(\mathbf{k}), \;\;\mathbf{k}=(k_{1},k_{2})\in \mathbb{Z}^{2},
\end{equation*}
or by taking the inverse Fourier transform,
$$u(t)=\left(e^{ik_{1}b(\mathbf{k})t} \widehat{u_{0}}(\mathbf{k})\right)^{\vee},\;\;t\in \mathbb{R}.$$
It means that
\begin{equation*}
	u(\mathbf{x},t)=\sum_{\mathbf{k}\in \mathbb{Z}^{2}} e^{ik_{1}b(\mathbf{k})t} \widehat{u_{0}}(\mathbf{k}) e^{i\mathbf{k}\cdot\mathbf{x}}, \;\;\mathbf{x}=(x,y)\in \mathbb{T}^{2},
\end{equation*}
must be the unique solution of IVP \eqref{FEQ} with initial data $u_{0}\in H^{s}_{p}(\mathbb{T}^{2}),$ where the series converges in the sense of $\mathcal{D}'(\mathbb{T}^{2})$. In particular,  the unitary group $U(t)$ is given by
\begin{equation*}
	t\mapsto U(t)\varphi:= e^{\partial_{x}\mathcal{L}t}\varphi=\left(e^{ik_{1}b(\mathbf{k})t}\widehat{\varphi}( \mathbf{k} )\right)^{\vee},\;\;\mathbf{k}=(k_{1},k_{2}) \in \mathbb{Z}^{2},
\end{equation*}
in such way that the solution  of IVP \eqref{FEQ} with initial data $u_{0}\in H^{s}_{p}(\mathbb{T}^{2}),$ becomes $u(t)=U(t)u_{0},$ for any $t\in \mathbb{R}.$ Also, recall that according to  \cite[Corollary 3.2.6]{Cazenave and Haraux} the adjoint operator $U(t)^{\ast}$ of $U(t)$  is linear bounded and satisfies $U(t)^{*}=U(-t)$, for all $t\in \mathbb{R}.$
\end{rem}

Next, we deal with the well-posedness of the non-homogeneous linear problem \eqref{pcont}. The following lemma is needed.

\begin{lem}\label{opeG}
	Let $s\geq 0$ and $G$ be defined as in \eqref{opdef}. Given any $T>0$, the operator $$G:L^{2}([0,T];H_{p}^{s}(\mathbb{T}^{2})) \longrightarrow L^{2}([0,T];H_{p}^{s}(\mathbb{T}^{2}))$$ is linear and bounded.
\end{lem}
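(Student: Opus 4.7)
The plan is to reduce this statement to the pointwise-in-time bound \eqref{opl} that has already been recorded for $G$ acting on $H^{s}_{p}(\mathbb{T}^{2})$. Linearity of $G$ as an operator on $L^{2}([0,T];H^{s}_{p}(\mathbb{T}^{2}))$ is immediate from the definition \eqref{opdef}: the operators $G_{1}$ and $G_{2}$ in \eqref{opG1}--\eqref{opG2} are linear in $\phi$ because averaging over a single variable is linear, and multiplication by the fixed functions $g_{1}(x)$, $g_{2}(y)$ preserves this linearity. Thus for $h_{1},h_{2}\in L^{2}([0,T];H^{s}_{p}(\mathbb{T}^{2}))$ and scalars $\alpha,\beta$, one has $G(\alpha h_{1}+\beta h_{2})(\cdot,t)=\alpha G(h_{1}(\cdot,t))+\beta G(h_{2}(\cdot,t))$ for a.e.\ $t\in[0,T]$.

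For boundedness, I would start with $h\in L^{2}([0,T];H^{s}_{p}(\mathbb{T}^{2}))$ and observe that, by definition,
\[
\|Gh\|_{L^{2}([0,T];H^{s}_{p}(\mathbb{T}^{2}))}^{2} = \int_{0}^{T}\|G(h(\cdot,t))\|_{H^{s}_{p}(\mathbb{T}^{2})}^{2}\,dt.
\]
Applying the already-established bound \eqref{opl} pointwise in $t$ with constant $C=C(s,g_{1},g_{2})$, one obtains
\[
\|G(h(\cdot,t))\|_{H^{s}_{p}(\mathbb{T}^{2})}^{2}\leq C^{2}\|h(\cdot,t)\|_{H^{s}_{p}(\mathbb{T}^{2})}^{2}\quad\text{for a.e. }t\in[0,T],
\]
so integrating in time yields $\|Gh\|_{L^{2}([0,T];H^{s}_{p}(\mathbb{T}^{2}))}\leq C\|h\|_{L^{2}([0,T];H^{s}_{p}(\mathbb{T}^{2}))}$, giving both boundedness and the operator norm estimate.

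The only subtlety is strong measurability of $t\mapsto G(h(\cdot,t))$ as a map into $H^{s}_{p}(\mathbb{T}^{2})$, which is needed to make sense of the Bochner integral above. This is not really an obstacle: since $G$ is a bounded (hence continuous) linear operator on $H^{s}_{p}(\mathbb{T}^{2})$, it sends strongly measurable $H^{s}_{p}(\mathbb{T}^{2})$-valued functions to strongly measurable ones, so measurability of $t\mapsto Gh(\cdot,t)$ follows from measurability of $t\mapsto h(\cdot,t)$. I do not foresee any genuine obstacle in this lemma; the content is entirely in \eqref{opl}, and the lemma is a standard lifting of that bound from the stationary space to its $L^{2}$-in-time valued version.
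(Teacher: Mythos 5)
Your proposal is correct and follows essentially the same route as the paper's proof: both reduce the statement to the stationary bound \eqref{opl} applied pointwise in $t$ and then integrate over $[0,T]$. Your additional remark on strong measurability is a harmless refinement that the paper leaves implicit.
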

\begin{proof}
It is clear that $G$ is linear. In addition, for any $h\in L^{2}([0,T];H_{p}^{s}(\mathbb{T}^{2}))$,  it follows from  \eqref{opl} that
\[
	\begin{split}
		\|Gh\|^{2}_{L^{2}([0,T];H_{p}^{s}(\mathbb{T}^{2}))}&= 
		\int_{0}^{T}\|Gh(t)\|^{2}_{H_{p}^{s}(\mathbb{T}^{2})} dt\\
&\leq C^{2}
\int_{0}^{T}\|h(t)\|^{2}_{H_{p}^{s}(\mathbb{T}^{2})} dt\\
&=C^{2} \|h\|^{2}_{L^{2}([0,T];H_{p}^{s}(\mathbb{T}^{2}))},
	\end{split}
\]
which yields the desired.
\end{proof}

\begin{thm}
	Let $T>0,$ $s\geq 0,$ $u_{0}\in H_{p}^{s}(\mathbb{T}),$ and $h\in L^{2}([0,T];H_{p}^{s}(\mathbb{T}^{2})).$ Then there exists a unique (mild) solution $u\in C([0,T];H_{p}^{s}(\mathbb{T}^{2}))$ of the IVP \eqref{pcont}.
\end{thm}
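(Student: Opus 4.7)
The plan is to construct the mild solution via Duhamel's formula and rely on the properties of the unitary group $\{U(t)\}_{t\in \mathbb{R}}$ furnished by Theorem \ref{wellpose}. Specifically, I would define
\[
u(t) := U(t)u_{0}+\int_{0}^{t} U(t-\tau)\,G(h)(\tau)\,d\tau,\qquad t\in[0,T],
\]
and verify that (i) $u\in C([0,T];H_{p}^{s}(\mathbb{T}^{2}))$, (ii) $u(0)=u_{0}$, and (iii) any two mild solutions must coincide.

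First, I would note that the Fourier symbol of $U(t)$ is $e^{ik_{1}b(\mathbf{k})t}$, which has modulus one for real $t$. Hence $U(t)$ acts as an isometry on every $H_{p}^{s}(\mathbb{T}^{2})$, and by the strong continuity established in Theorem \ref{wellpose}, the free-evolution term $t\mapsto U(t)u_{0}$ already belongs to $C([0,T];H_{p}^{s}(\mathbb{T}^{2}))$. For the Duhamel term, Lemma \ref{opeG} provides $G(h)\in L^{2}([0,T];H_{p}^{s}(\mathbb{T}^{2}))$, so the integrand $\tau\mapsto U(t-\tau)G(h)(\tau)$ is strongly measurable and satisfies
\[
\|U(t-\tau)G(h)(\tau)\|_{H_{p}^{s}(\mathbb{T}^{2})}=\|G(h)(\tau)\|_{H_{p}^{s}(\mathbb{T}^{2})},
\]
which lies in $L^{1}([0,T])$ by Cauchy--Schwarz. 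Thus the Bochner integral defining the Duhamel term is a well-defined element of $H_{p}^{s}(\mathbb{T}^{2})$, and a standard bound gives $\|u(t)\|_{H_{p}^{s}}\leq \|u_{0}\|_{H_{p}^{s}}+\sqrt{T}\,\|G(h)\|_{L^{2}([0,T];H_{p}^{s})}$.

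The main technical (though routine) point is continuity in $t$ of the Duhamel convolution: splitting $\int_{0}^{t_{n}}-\int_{0}^{t}$ into a symmetric-difference piece (whose measure tends to zero) and a common piece on which strong continuity of $U$ combined with dominated convergence applies, one concludes the integral is in $C([0,T];H_{p}^{s}(\mathbb{T}^{2}))$. Uniqueness is then immediate from linearity: if $u_{1},u_{2}$ are two mild solutions associated with the same pair $(u_{0},h)$, their difference $w=u_{1}-u_{2}$ satisfies the Duhamel identity with zero data, forcing $w\equiv 0$. Since $G$ maps into $H_{p}^{s}(\mathbb{T}^{2})$ and $\partial_{x}\mathcal{L}:H_{p}^{s}(\mathbb{T}^{2})\to H_{p}^{s-r}(\mathbb{T}^{2})$, the mild solution produced this way is consistent with the interpretation of \eqref{pcont} in $H_{p}^{s-r}(\mathbb{T}^{2})$, which completes the proof.
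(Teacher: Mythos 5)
Your proposal is correct and follows essentially the same route as the paper: both construct the mild solution by Duhamel's formula $u(t)=U(t)u_{0}+\int_{0}^{t}U(t-\tau)Gh(\tau)\,d\tau$ after observing that $Gh\in L^{1}([0,T];H_{p}^{s}(\mathbb{T}^{2}))$; the paper simply delegates the continuity and uniqueness verifications to the standard semigroup result (Corollary 2.2 in Pazy, Chapter 4), whereas you carry them out by hand.
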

\begin{proof}
As in the proof of Lemma	\ref{opeG} we infer that $Gh\in L^{1}([0,T];H_{p}^{s}(\mathbb{T}^{2})).$
 Corollary 2.2 and Definition 2.3 in \cite[Chapter 4]{Pazy} imply that 
 $$u(t)=U(t)u_{0}+\int_{0}^{t}U(t-t')Gh(t')dt'$$ is the unique (mild) solution of
 $$
\begin{cases}
	u\in C\left([0,T];H^{s}_{p}(\mathbb{T}^{2})\right), & \\
	\partial_{t}u=\partial_{x}\mathcal{L}u+Gh(t) \in H^{s-r}_{p}(\mathbb{T}^{2}),\;\; t\in(0,T), & \\
	u(0)=u_{0}\in H_{p}^{s}(\mathbb{T}^{2}),
\end{cases}
$$
which in turn provides the solution of \eqref{pcont}.
\end{proof}

\section{Exact controllability results}\label{exactcontrol}
This section is devoted to prove Theorems \ref{crtI} and \ref{crtI1}, as an application of the  classical moment method (see \cite{Russell}). Before starting with the results, note that by replacing $u_{1}$ by $u_{1}-U(T)u_{0}$ if necessary, we may assume without loss of generality that $u_{0}=0.$ Consequently, in view of our assumptions, we may assume $\widehat{u_{1}}(0,0)=\widehat{u_{0}}(0,0)=0.$

Let us start by writing  the terminal  estate $u_{1}\in H_{p}^{s}(\mathbb{T}^{2})$ as
\begin{equation}\label{repreu}
	u_{1}(\mathbf{x})=	\sum\limits_{ \mathbf{k}\in \mathbb{Z}^{2}}\widehat{u_{1}}(\mathbf{k}) e^{i\mathbf{k}\cdot\mathbf{x}}
	=2\pi\sum\limits_{\mathbf{k}\in \mathbb{Z}^{2}}  \widehat{u_{1}}(\mathbf{k}) \;\psi_{\mathbf{k}}(\mathbf{x}),
\end{equation}
where the series converges in the distributional sense and  $\psi_{\mathbf{k}}$ is defined as in \eqref{orto}.  Next result characterizes the exact controllability of the linear non-homogeneous system \eqref{pcont1}. The idea of the proof  is similar to that of    
\cite[Lemma 4.1]{Manhendra and Francisco}, passing to the frequency space when necessary; so we omit the details.

\begin{lem}\label{charact}
	Let $s\geq 0$ and $T>0$ be given. Assume $u_{1}\in H_{p}^{s}(\mathbb{T}^{2})$ with $\widehat{u_{1}}(0,0)=0.$ Then, there exists $h\in L^{2}([0,T];H_{p}^{s}(\mathbb{T}^{2}))$ such that the solution of the IVP  \eqref{pcont1}
with initial data $u_{0}=0$ satisfies $u(T)=u_{1}$ if and only if 
\begin{equation}\label{CEQ}
	\int_{0}^{T}\left\langle Gh(\cdot,\cdot,t),\varphi(\cdot,\cdot,t)\right\rangle_{H^{s}_{p}(\mathbb{T}^{2})\times(H^{s}_{p}(\mathbb{T}^{2}))'}dt
	=\left\langle u_{1}(\cdot,\cdot),\varphi_{0}(\cdot,\cdot)\right\rangle_{H^{s}_{p}(\mathbb{T}^{2})\times(H^{s}_{p}(\mathbb{T}^{2}))'},
\end{equation}
for any $\varphi_{0}\in (H^{s}_{p}(\mathbb{T}^{2}))'$, where $\varphi$
is the solution of the adjoint problem
\begin{equation}\label{adsis}
	\begin{cases}
		\varphi\in C\left([0,T]:\left(H_{p}^{s}(\mathbb{T}^{2})\right)'\right),\\
		\partial_{t}\varphi=\partial_{x}\mathcal{L}\varphi \in H_{p}^{-s-r}(\mathbb{T}^{2}), \quad t>0,\\
		\varphi(T)=\varphi_{0}. 
	\end{cases}
\end{equation}
\end{lem}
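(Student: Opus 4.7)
The plan is to exploit the duality between the control problem and the adjoint evolution, which is standard once one has the semigroup representation from Section \ref{wellp}. First, I would write the mild solution of \eqref{pcont1} with $u_0=0$ via Duhamel's formula as
$$u(T)=\int_0^T U(T-t')\,Gh(t')\,dt',$$
and the solution of the adjoint system \eqref{adsis} with terminal datum $\varphi_0\in(H^s_p(\mathbb{T}^2))'\cong H^{-s}_p(\mathbb{T}^2)$ as $\varphi(t)=U(t-T)\varphi_0$. This is legitimate because $\partial_x\mathcal{L}$ is skew-adjoint on every Sobolev scale, so $\{U(t)\}_{t\in\mathbb R}$ extends to a unitary group on $H^{-s}_p(\mathbb{T}^2)$ with $U(t)^\ast=U(-t)$, as recorded after Theorem \ref{wellpose}.

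Next I would pair $u(T)$ against $\varphi_0$ in the $H^s_p\times H^{-s}_p$ duality, move $U(T-t')$ onto the second slot via the unitarity identity $U(t)^\ast=U(-t)$, and recognize the result as $\varphi(t')$:
$$\bigl\langle u(T),\varphi_0\bigr\rangle=\int_0^T\bigl\langle U(T-t')Gh(t'),\varphi_0\bigr\rangle\,dt'=\int_0^T\bigl\langle Gh(t'),U(t'-T)\varphi_0\bigr\rangle\,dt'=\int_0^T\bigl\langle Gh(t'),\varphi(t')\bigr\rangle\,dt'.$$
If $u(T)=u_1$, this is exactly \eqref{CEQ}. Conversely, if \eqref{CEQ} holds for \emph{every} $\varphi_0\in(H^s_p(\mathbb{T}^2))'$, then $\langle u(T)-u_1,\varphi_0\rangle=0$ for all such $\varphi_0$, and non-degeneracy of the $H^s_p\times H^{-s}_p$ pairing yields $u(T)=u_1$.

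Two routine points require a line of justification. First, Fubini on the double pairing is licit because, by Lemma \ref{opeG} and the fact that $\|\varphi(t')\|_{H^{-s}_p}=\|\varphi_0\|_{H^{-s}_p}$ is constant in $t'$, the integrand is dominated by $\|Gh(t')\|_{H^s_p}\|\varphi_0\|_{H^{-s}_p}\in L^2(0,T)\subset L^1(0,T)$. Second, the switch $\langle U(T-t')Gh(t'),\varphi_0\rangle=\langle Gh(t'),U(t'-T)\varphi_0\rangle$ is meaningful at all regularities $s\geq 0$ because $U$ preserves each $H^\sigma_p(\mathbb{T}^2)$. There is no real obstacle here; this is why the authors point to \cite[Lemma 4.1]{Manhendra and Francisco} and omit the details. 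The analogous one-dimensional argument carries over verbatim once \eqref{opl} and Theorem \ref{wellpose} are in hand.
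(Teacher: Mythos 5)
Your proposal is correct and follows essentially the same route the paper intends: the paper omits the proof, deferring to the one-dimensional analogue in \cite[Lemma 4.1]{Manhendra and Francisco}, which is precisely this duality computation (Duhamel representation of $u(T)$, transfer of $U(T-t')$ onto $\varphi_0$ via $U(t)^{\ast}=U(-t)$, and non-degeneracy of the $H^{s}_{p}\times H^{-s}_{p}$ pairing for the converse). Your added justifications for interchanging the pairing with the time integral and for the unitarity of $U$ on every $H^{\sigma}_{p}(\mathbb{T}^{2})$ are exactly the points the paper summarizes as ``passing to the frequency space when necessary.''
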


The following characterization to show the existence of control for the linear system \eqref{pcont1} (with initial data $u_{0}=0$) is  a direct consequence of Lemma \ref{charact}. It provides a method to find the control function $h$ explicitly. For a proof in a very similar situation, we refer the reader to \cite[Lemma 4.3]{Vielma and Pastor}.

\begin{cor}[Moment Equation]\label{Momenteq}
	Let $s\geq 0$ and $T>0$ be given. If $u_1$ is written as in \eqref{repreu} and satisfies $\widehat{u_{1}}(0,0)=0,$ then the solution of \eqref{pcont1}
	with initial data $u_{0}=0$ satisfies $u(T)=u_{1}$ if and only if there exists $h\in L^{2}([0,T];H_{p}^{s}(\mathbb{T}^{2})) $ such that
	\begin{equation}\label{Momenteq1}
	\int_{0}^{T}\left(Gh(\cdot,\cdot,t), e^{-i\lambda_{\mathbf{k}}(T-t)} \psi_{\mathbf{k}}(\cdot,\cdot) \right)_{L^{2}(\mathbb{T}^{2})}	dt=2\pi \widehat{u_{1}}(\mathbf{k}),\;\;\forall\; \mathbf{k}=(k_{1},k_{2})\in \mathbb{Z}^{2},
	\end{equation}
where $\lambda_{\mathbf{k}}=k_{1} b(\mathbf{k}).$
\end{cor}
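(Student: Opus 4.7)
The plan is to deduce this moment equation as a direct consequence of Lemma \ref{charact} by choosing the test data $\varphi_0$ in the adjoint problem to be the orthonormal eigenfunctions $\psi_{\mathbf{k}}$, and then using the completeness of $\{\psi_{\mathbf{k}}\}$ for the converse.

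First I would solve the adjoint problem \eqref{adsis} explicitly. Since $\partial_x \mathcal{L}$ is skew-adjoint and generates the unitary group $\{U(t)\}_{t\in\mathbb{R}}$ constructed in Theorem \ref{wellpose}, the unique solution with terminal datum $\varphi(T)=\varphi_0$ is $\varphi(t)=U(t-T)\varphi_0$. In view of hypothesis $(H1)$, the functions $\psi_{\mathbf{k}}$ are eigenfunctions of $\partial_x \mathcal{L}$ with eigenvalue $i\lambda_{\mathbf{k}}$, so $U(t)\psi_{\mathbf{k}} = e^{i\lambda_{\mathbf{k}} t}\psi_{\mathbf{k}}$ and consequently, for the choice $\varphi_0=\psi_{\mathbf{k}}$, the solution of \eqref{adsis} is
\[
\varphi(\cdot,\cdot,t)\;=\; e^{i\lambda_{\mathbf{k}}(t-T)}\psi_{\mathbf{k}}(\cdot,\cdot)\;=\; e^{-i\lambda_{\mathbf{k}}(T-t)}\psi_{\mathbf{k}}(\cdot,\cdot).
\]

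Next I would insert this choice into the characterization \eqref{CEQ} furnished by Lemma \ref{charact}. Since $\psi_{\mathbf{k}}\in C^{\infty}(\mathbb{T}^2)\subset (H^s_p(\mathbb{T}^2))'$ for every $s\geq 0$, the duality pairing reduces to the $L^2_p$ inner product. On the right-hand side of \eqref{CEQ}, using $\psi_{\mathbf{k}}(\mathbf{x})=\frac{1}{2\pi}e^{i\mathbf{k}\cdot\mathbf{x}}$, one computes
\[
\langle u_1,\psi_{\mathbf{k}}\rangle = (u_1,\psi_{\mathbf{k}})_{L^2(\mathbb{T}^2)} = \frac{1}{2\pi}\int_{\mathbb{T}^2} u_1(\mathbf{x})\,e^{-i\mathbf{k}\cdot\mathbf{x}}\,d\mathbf{x} = 2\pi\,\widehat{u_1}(\mathbf{k}),
\]
which produces exactly the right-hand side of \eqref{Momenteq1}. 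Thus \eqref{CEQ} with $\varphi_0=\psi_{\mathbf{k}}$ is precisely the moment equation \eqref{Momenteq1}, proving the "only if" direction.

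For the converse, I would argue by linearity and density: the family $\{\psi_{\mathbf{k}}\}_{\mathbf{k}\in\mathbb{Z}^2}$ is a complete orthonormal system of $L^2_p(\mathbb{T}^2)$, hence its finite linear combinations are dense in $(H^s_p(\mathbb{T}^2))'=H^{-s}_p(\mathbb{T}^2)$ for every $s\geq 0$. Assuming \eqref{Momenteq1} holds for all $\mathbf{k}\in\mathbb{Z}^2$, both sides of \eqref{CEQ} become continuous linear functionals of $\varphi_0\in (H^s_p(\mathbb{T}^2))'$ that agree on this dense set (on the left, via the boundedness of $G$ given by Lemma \ref{opeG} and the unitarity of $U(\cdot)$; on the right, via the duality pairing), so they agree everywhere. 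Lemma \ref{charact} then yields the existence of the desired control $h$. The only technical point to watch is verifying this continuous dependence in the dual topology, but this follows routinely from Cauchy--Schwarz on $[0,T]$ together with the bound \eqref{opl} and the isometry property of the group $U(t)$.
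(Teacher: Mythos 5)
Your argument is correct and is exactly the route the paper intends: it states the corollary as a direct consequence of Lemma \ref{charact} (referring to \cite{Vielma and Pastor} for details), namely specializing \eqref{CEQ} to $\varphi_{0}=\psi_{\mathbf{k}}$, whose adjoint evolution is $e^{-i\lambda_{\mathbf{k}}(T-t)}\psi_{\mathbf{k}}$ by $(H1)$, and recovering the general case from completeness of $\{\psi_{\mathbf{k}}\}_{\mathbf{k}\in\mathbb{Z}^{2}}$ together with the continuity in $\varphi_{0}$ of both sides of \eqref{CEQ}. Your computation of the right-hand side, $\left\langle u_{1},\psi_{\mathbf{k}}\right\rangle=2\pi\,\widehat{u_{1}}(\mathbf{k})$, and the continuity estimate via the unitarity of $U(t)$ and \eqref{opl} are both accurate, so nothing is missing.
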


With Corollary \ref{Momenteq} in hand, we see that in order to prove Theorem \ref{crtI} (for instance) we only need to construct a control function $h$ satisfying relation \eqref{Momenteq1}. Before that, we need two additional results. The first one, gives some properties of how the operators $G_{1}$ and $G_{2}$  behave at complex exponential functions.

\begin{lem}\label{invertmatrix}
	For $G_{1}$ and $G_{2}$ as in \eqref{opG1}-\eqref{opG2}, define
	\begin{equation}\label{invertmatrix1}
		m^{j,k}_{1}:=\widehat{G_{1}(e^{ijx})}(k) =\frac{1}{2 \pi}\int_{0}^{2\pi}G_{1}(e^{ijx}) e^{-ikx}\;dx,\;\;\;\;j,k\in\mathbb{Z},
	\end{equation}
	\begin{equation}\label{invertmatrix2}
	m^{j,k}_{2}:=\widehat{G_{2}(e^{ijy})}(k)=\frac{1}{2 \pi}\int_{0}^{2\pi}G_{2}(e^{ijy}) e^{-iky}\;dy,\;\;\;\;j,k\in\mathbb{Z}.
\end{equation}
	Then
	\begin{itemize}
		\item [(i)]  $m_{n}^{j,0}=0,$  for all $j\in\mathbb{Z}$ and $n=1,2;$
		\item[(ii)] $m_{n}^{0,k}=0$, for all $k\in\mathbb{Z}$ and $n=1,2;$
		
		\item [(iii)] $m_{n}^{0,0}=0$, for $n=1,2;$
		
		\item [(iv)] If $j,k\in \mathbb{Z}$ with $j\neq 0$ and $k \neq 0,$ then
		\begin{equation}\label{posit1}
m_{n}^{j,k}=\begin{cases}
	\displaystyle{\frac{1}{2\pi}}, & \text{if}\; j=k,\\
	0, & \text{if}\; j\neq k, 
       \end{cases}
		\end{equation}
	for $n=1,2.$
	\end{itemize}
\end{lem}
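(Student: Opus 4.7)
The plan is to establish everything by direct computation from the definitions \eqref{opG1}--\eqref{opG2}, using only the orthogonality of the complex exponentials $\{e^{ikx}\}_{k \in \mathbb{Z}}$ on $[0, 2\pi]$. Since $e^{ijx}$ depends on $x$ only, the evaluations of $G_1(e^{ijx})$ reduce to one-dimensional integrals, and likewise for $G_2(e^{ijy})$.

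First I would compute $G_1(e^{ijx})$ explicitly. Splitting into cases gives
\[
G_1(e^{ijx}) = \frac{1}{2\pi}e^{ijx} - \frac{1}{(2\pi)^2}\int_0^{2\pi}e^{ijx'}\,dx' =
\begin{cases}
0, & j=0,\\[1mm]
\dfrac{1}{2\pi}\,e^{ijx}, & j\neq 0,
\end{cases}
\]
since $\int_0^{2\pi}e^{ijx'}dx'$ equals $2\pi$ when $j=0$ and $0$ otherwise. This single identity already settles several of the claims for $n=1$: the case $j=0$ gives $G_1(1) \equiv 0$, whence $m_1^{0,k} = 0$ for every $k$, handling (ii) (with $n=1$) and (iii) (with $n=1$), together with the $j=0$ instance of (i).

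Next, for $j\neq 0$ I would plug the formula $G_1(e^{ijx}) = \tfrac{1}{2\pi}e^{ijx}$ into \eqref{invertmatrix1} and invoke orthogonality:
\[
m_1^{j,k} = \frac{1}{(2\pi)^2}\int_0^{2\pi} e^{i(j-k)x}\,dx =
\begin{cases}
\dfrac{1}{2\pi}, & j=k,\\[1mm]
0, & j\neq k,
\end{cases}
\]
which yields (iv) for $n=1$ and also completes the proof of (i) by covering the remaining sub-case $j\neq 0$, $k=0$.

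Finally, the computations for $G_2$ and the coefficients $m_2^{j,k}$ are entirely parallel: one replaces $x$ with $y$ throughout and uses \eqref{med2} in place of \eqref{med1}. There is no substantive obstacle here; the lemma is a direct consequence of the normalization conditions \eqref{med1}--\eqref{med2} baked into the definition of $G$ and the standard orthogonality relations. The only care needed is bookkeeping to verify that the four stated cases jointly exhaust all pairs $(j,k) \in \mathbb{Z}^2$.
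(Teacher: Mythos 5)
Your computation is correct and is exactly the ``direct calculation'' the paper invokes (it gives no further detail), so the two proofs coincide. One small slip in your closing remarks: the lemma does not actually rest on the normalizations \eqref{med1}--\eqref{med2}, since $G_1$ and $G_2$ in \eqref{opG1}--\eqref{opG2} involve no $g_1$ or $g_2$ --- everything follows from orthogonality of the exponentials alone, as your own calculation shows.
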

\begin{proof}
The proof follows by direct calculations.
\end{proof}

The second result, gives the existence of a \textit{biorthogonal} basis with respect to  $\{e^{-i\lambda_{\mathbf{k}}t}\}_{\mathbf{k}\in \mathbb{J}}$.

\begin{lem}\label{basislemma}
Let $H:=\overline{\text{span}\{e^{-i\lambda_{\mathbf{k}} t}:\mathbf{k}\in \mathbb{J}\}}$ in $L^{2}([0,T]).$ There exists a unique basis $\{q_{\mathbf{k}}\}_{  \mathbf{k} \in \mathbb{J}}\subset H$ such that
\begin{equation}\label{dualbg}
	(e^{-i \lambda_{\mathbf{k}} t}\;,\;q_{ \mathbf{k}' })_{H}=\int_{0}^{T}e^{-i \lambda_{\mathbf{k}} t}\overline{q_{ \mathbf{k}' }}(t)\;dt=\delta_{ \mathbf{k} \mathbf{k}' },\;\;\;\mathbf{k} , \mathbf{k}' \in \mathbb{J},
\end{equation}
where $\delta_{ \mathbf{k} \mathbf{k}' }$ represents the Kronecker delta.
\end{lem}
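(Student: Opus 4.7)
The plan is to identify this as a direct consequence of a generalized Ingham-type theorem combined with the classical fact that a Riesz basis in a Hilbert space admits a unique biorthogonal dual basis in its closed span. The main ingredients are the gap condition \eqref{gammacon} together with the asymptotic gap condition \eqref{gammalinhacond} and the assumption $T > 2\pi/\gamma'$, which are precisely the hypotheses under which the exponentials form a Riesz basis.

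First, I would re-index the family: since by construction $\mathbb{J}$ parameterizes the \emph{distinct} values of $\lambda_{\mathbf{k}}$, the real numbers $\{\lambda_{\mathbf{k}}\}_{\mathbf{k}\in\mathbb{J}}$ form a sequence of pairwise distinct reals satisfying $\inf_{\mathbf{k}\neq\mathbf{k}'} |\lambda_{\mathbf{k}}-\lambda_{\mathbf{k}'}| = \gamma > 0$ and with asymptotic gap $\gamma'$. The generalized Ingham theorem quoted as \cite{7}, applied under the hypothesis $T > 2\pi/\gamma'$, then yields two positive constants $C_1, C_2$ such that
\begin{equation*}
C_1 \sum_{\mathbf{k}\in\mathbb{J}} |a_{\mathbf{k}}|^2 \;\leq\; \Bigl\| \sum_{\mathbf{k}\in\mathbb{J}} a_{\mathbf{k}}\, e^{-i\lambda_{\mathbf{k}} t} \Bigr\|_{L^2([0,T])}^2 \;\leq\; C_2 \sum_{\mathbf{k}\in\mathbb{J}} |a_{\mathbf{k}}|^2
\end{equation*}
for every finitely supported sequence $(a_{\mathbf{k}})$. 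By density, this inequality extends to all $\ell^2$-sequences, so the family $\{e^{-i\lambda_{\mathbf{k}} t}\}_{\mathbf{k}\in\mathbb{J}}$ is a Riesz basis of $H$.

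Next I would construct the dual basis. Let $S\colon \ell^2(\mathbb{J}) \to H$ denote the synthesis operator $S(a_{\mathbf{k}}) = \sum a_{\mathbf{k}} e^{-i\lambda_{\mathbf{k}} t}$; by the two-sided inequality above, $S$ is a bounded isomorphism. Define $q_{\mathbf{k}} := (S^{\ast})^{-1} e_{\mathbf{k}}$, where $\{e_{\mathbf{k}}\}$ is the canonical basis of $\ell^2(\mathbb{J})$. Then $q_{\mathbf{k}} \in H$ and
\begin{equation*}
(e^{-i\lambda_{\mathbf{k}} t}, q_{\mathbf{k}'})_H = (S e_{\mathbf{k}},\, (S^{\ast})^{-1} e_{\mathbf{k}'})_H = (e_{\mathbf{k}}, e_{\mathbf{k}'})_{\ell^2} = \delta_{\mathbf{k}\mathbf{k}'},
\end{equation*}
establishing \eqref{dualbg}. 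The family $\{q_{\mathbf{k}}\}_{\mathbf{k}\in\mathbb{J}}$ is itself a Riesz basis of $H$ as the image of an orthonormal basis under the isomorphism $(S^{\ast})^{-1}$.

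For uniqueness, suppose $\{\tilde q_{\mathbf{k}}\}\subset H$ is another family satisfying \eqref{dualbg}. Then $r_{\mathbf{k}} := q_{\mathbf{k}} - \tilde q_{\mathbf{k}} \in H$ satisfies $(e^{-i\lambda_{\mathbf{k}'} t}, r_{\mathbf{k}})_H = 0$ for every $\mathbf{k}'\in\mathbb{J}$, and since $H$ is the closed linear span of the exponentials, $r_{\mathbf{k}} = 0$. I expect no serious obstacle here beyond invoking the Ingham-type estimate correctly; the main subtlety is merely verifying that the abstract Ingham theorem from \cite{7} applies to our situation, namely to an arbitrary countable set of distinct reals with the prescribed uniform and asymptotic gap properties, which is exactly its scope.
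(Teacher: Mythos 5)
Your proposal is correct and follows essentially the same route as the paper, which likewise invokes the generalized Ingham theorem of Komornik--Loreti (Theorem 4.6 there) under the gap conditions and $T>2\pi/\gamma'$ to obtain that $\{e^{-i\lambda_{\mathbf{k}}t}\}_{\mathbf{k}\in\mathbb{J}}$ is a Riesz basis of $H$, and then appeals to the standard Hilbert-space theory of biorthogonal systems. Your explicit construction of the dual basis via the synthesis operator and the density argument for uniqueness are exactly the details the paper leaves to the cited references.
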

\begin{proof}
The proof is quite well-known by now (see, for instance, \cite[Theorem 1.3]{Linares Ortega} or \cite[Thorem 1.3]{Vielma and Pastor}). The main idea is to use Ingham's Theorem (see {\cite[Theorem 4.6, pag. 67]{7}}) to show that $\{e^{-i\lambda_{\mathbf{k}}t}\}_{\mathbf{k}\in \mathbb{J}}$ is a Riesz basis of $H$. Then, the existence of  the biorthogonal basis follows from the standard theory in Hilbert spaces (see \cite{9}).
\end{proof}

We are now able to show the main results of this section regarding exact controllability.

\begin{proof}[Proof of Theorem \ref{crtI}]
	We prove this Theorem in  three steps. Recall, we are assuming $\widehat{u_{1}}(0,0)=0$ in \eqref{repreu}.\\

\noindent
{\bf{Step 1.}}  Construction of $h.$

Let $\{q_{ \mathbf{k} }\}_{ \mathbf{k} \in \mathbb{J}}$ be the sequence obtained in Lemma \ref{basislemma}. As a first step we will extend the definition of $q_{\mathbf{k}}$ for all $\mathbf{k} \in \mathbb{Z}^{2}$. We do this following the rule: given $\mathbf{k} \in \mathbb{Z}^{2}$ we know that there exists $ \mathbf{k}' \in \mathbb{J}$ such that $\lambda_{\mathbf{k}}= \lambda_{\mathbf{k}'},$ so we define 
\begin{equation}\label{defq}
	q_{\mathbf{k}}(t):=q_{ \mathbf{k}' }(t), \quad t\in [0,T].
\end{equation}

The control function $h$ is now defined as
\begin{equation}\label{thecontrol}
	h (x,y,t)=\sum_{\textbf{j} \in \mathbb{Z}^{2}} h_{\textbf{j}}\;\overline{q_{\textbf{j}}(t)}\;\psi_{\textbf{j}}(x,y),\;\;(x,y)\in \mathbb{T}^{2},
\end{equation}
for suitable  coefficients $h_{\textbf{j}}$'s to be determined later by using the Moment equation. Therefore, we note that for any $\mathbf{k}=(k_{1},k_{2})\in \mathbb{Z}^{2}$ the left-hand side of   \eqref{Momenteq1} can be rewritten as
\[
		\begin{split}
			I:=&
			\int\limits_{0}^{T}
			\left(Gh(x,y,t), e^{-i\lambda_{\mathbf{k}}(T-t)} \psi_{\mathbf{k}}(x,y)\right)_{L^{2}(\mathbb{T}^{2})}dt\\
			&=
			\int\limits_{0}^{T}\left(\sum_{\textbf{j}\in \mathbb{Z}^{2}} h_{\textbf{j}} \overline{q_{\textbf{j}}(t)} G(\psi_{\textbf{j}})(x,y,t),
			e^{-i\lambda_{\mathbf{k}}(T-t)} \psi_{\mathbf{k}}(x,y) \right)_{L^{2}_{p}(\mathbb{T}^{2})}dt\\
			&=
			\sum_{\textbf{j}\in \mathbb{Z}^{2}}h_{\textbf{j}} \int\limits_{0}^{T} \overline{q_{\textbf{j}}(t)} \;
			e^{i\lambda_{\mathbf{k}}(T-t)} \;dt \left(G(\psi_{\textbf{j}})(x,y),\; \psi_{\mathbf{k}}(x,y)\right)_{L^{2}(\mathbb{T}^{2})}\\
			&=
\sum_{\textbf{j}\in \mathbb{Z}^{2}}h_{\textbf{j}}\; 
e^{i\lambda_{\mathbf{k}}T} 
\left( \int\limits_{0}^{T}
e^{-i\lambda_{\mathbf{k}}t}\;
 \overline{q_{\textbf{j}}(t)} 
 \;dt \right)
  \left(G(\psi_{\textbf{j}})(x,y),\; \psi_{\mathbf{k}}(x,y)\right)_{L^{2}(\mathbb{T}^{2})},
	\end{split}
\]
where
\[
	\begin{split} 
		\left(G(\psi_{\textbf{j}})(x,y),\; \psi_{\mathbf{k}}(x,y)\right)_{L^{2}(\mathbb{T}^{2})}
		&= \left(g_{2}(y) \frac{e^{ij_{2}y}}{2 \pi}  G_{1}(e^{ij_{1}x}), \; \frac{e^{ik_{1}x} e^{ik_{2}y}}{2 \pi} 
		\right)_{L^{2}(\mathbb{T}^{2})} \\
		& \quad + 
		\left(g_{1}(x) \frac{e^{ij_{1}x}}{2 \pi}  G_{2}(e^{ij_{2}y}), \; \frac{e^{ik_{1}x} e^{ik_{2}y}}{2 \pi} 
		\right)_{L^{2}(\mathbb{T}^{2})} \\
		&=
		\widehat{g_{2}(y)}(k_{2}-j_{2}) \left( \frac{1}{2 \pi} \int_{0}^{2 \pi}
		 G_{1}(e^{ij_{1}x}) e^{-ik_{1}x} dx\right)\\
		 &\quad + 	\widehat{g_{1}(x)}(k_{1}-j_{1}) \left( \frac{1}{2 \pi} \int_{0}^{2 \pi}
		 G_{2}(e^{ij_{2}y}) e^{-ik_{2}y} dy\right)\\
		 &=\widehat{g_{2}(y)}(k_{2}-j_{2}) \; m_{1}^{j_{1},k_{1}}+  \widehat{g_{1}(x)}(k_{1}-j_{1})\; m_{2}^{j_{2},k_{2}}.
	\end{split}
\]
Hence,
\begin{equation}\label{thecontrol1g}
\begin{split}
	I&=
\sum_{\textbf{j}\in \mathbb{Z}^{2}}h_{\textbf{j}}\; 
e^{i\lambda_{\mathbf{k}}T}
 \left( \int\limits_{0}^{T}
e^{-i\lambda_{\mathbf{k}} t}\;
\overline{q_{\textbf{j}}(t)} 
\;dt \right)
\widehat{g_{2}(y)}(k_{2}-j_{2})\;
m_{1}^{j_{1},k_{1}}\\
	&\quad + 
\sum_{\textbf{j}\in \mathbb{Z}^{2}}h_{\textbf{j}}\; 
e^{i\lambda_{\mathbf{k}}T} 
\left( \int\limits_{0}^{T}
e^{-i\lambda_{\mathbf{k}} t}\;
\overline{q_{\textbf{j}}(t)} 
\;dt \right) 
\widehat{g_{1}(x)}(k_{1}-j_{1})\;
m_{2}^{j_{2},k_{2}},
		\end{split}
\end{equation}
with $m_{n}^{j_{n},k_{n}}$ defined as in \eqref{invertmatrix1}-\eqref{invertmatrix2} for $n=1,2.$\\

\noindent
{\bf{Step 2.}}
Construction of the coefficients  $h_{\textbf{j}}$. 

First of all, note that in order to prove the first part of the theorem,  identity \eqref{thecontrol1g}, Lemma \ref{invertmatrix} (ii), and Corollary \ref{Momenteq} yield that it suffices to choose $h_{\textbf{j}}$'s such that
\begin{equation}\label{h formg}
\begin{split}
2\pi \widehat{u_{1}}(\mathbf{k})
&=\sum_{\textbf{j}\in \mathbb{Z}^{\ast}\times \mathbb{Z}}h_{\textbf{j}}\; 
e^{i\lambda_{\mathbf{k}}T}
\left( \int\limits_{0}^{T}
e^{-i\lambda_{\mathbf{k}} t}\;
\overline{q_{\textbf{j}}(t)} 
\;dt \right)
\widehat{g_{2}(y)}(k_{2}-j_{2})\;
m_{1}^{j_{1},k_{1}}\\
&\quad + 
\sum_{\textbf{j}\in \mathbb{Z}\times \mathbb{Z}^{\ast}}h_{\textbf{j}}\; 
e^{i\lambda_{\mathbf{k}}T} 
\left( \int\limits_{0}^{T}
e^{-i\lambda_{\mathbf{k}} t}\;
\overline{q_{\textbf{j}}(t)} 
\;dt \right) 
\widehat{g_{1}(x)}(k_{1}-j_{1})\;
m_{2}^{j_{2},k_{2}},
\end{split}
\end{equation}
 for all $\mathbf{k}=(k_{1},k_{2})\in \mathbb{Z}^{2}.$ Recall that $\mathbb{Z}^*=\mathbb{Z}\setminus\{0\}$.

We will now show  that we may indeed choose $h_{\textbf{j}}$'s satisfying \eqref{h formg}. To see this, first observe that, since $\widehat{u_{1}}(0,0)=0$, part (i)  in Lemma \ref{invertmatrix} implies that \eqref{h formg} holds for $(k_{1},k_{2})=(0,0)$ independently of $h_{\textbf{j}}$'s. In particular, we may choose $h_{(0,0)}=0$. Next, from Lemma \ref{invertmatrix} (i)-(iv), if $\mathbf{k}=(k_{1},k_{2})\in \mathbb{Z}^{2}$ with $k_{1}\neq 0$ and $k_{2}=0,$  we see that \eqref{h formg} reduces to
\begin{equation}\label{h formg1}
	\begin{split}
		2\pi \widehat{u_{1}}(k_{1},0)
		&=\sum_{\textbf{j}\in \mathbb{Z}^{\ast}\times \mathbb{Z}}h_{\textbf{j}}\; 
		e^{i\lambda_{(k_{1},0)}T}
		\left( \int\limits_{0}^{T}
		e^{-i\lambda_{(k_{1},0)} t}\;
		\overline{q_{\textbf{j}}(t)} 
		\;dt \right)
		\widehat{g_{2}(y)}(-j_{2})\;
		m_{1}^{j_{1},k_{1}}\\
		&=\sum_{j_{2}\in \mathbb{Z}} h_{(k_{1},j_{2})}\; 
		e^{i\lambda_{(k_{1},0)}T}
		\left( \int\limits_{0}^{T}
		e^{-i\lambda_{(k_{1},0)} t}\;
		\overline{q_{(k_{1},j_{2})}(t)} 
		\;dt \right)
		\widehat{g_{2}(y)}(-j_{2})\;
		m_{1}^{k_{1},k_{1}}.
	\end{split}
\end{equation}
According to \eqref{dualbg}-\eqref{defq}, all terms of the series on the right-hand side of \eqref{h formg1} is zero, except for those  $j_{2}\in \mathbb{Z}$ such that 
$$
\lambda_{(k_{1},j_{2})}=\lambda_{(k_{1},0)}.
$$
In view of hypothesis $(H2),$ this holds only for $j_{2}=0.$ Hence, 
\begin{equation}\label{h formg2}
\begin{split}
	2\pi \widehat{u_{1}}(k_{1},0)
	&= h_{(k_{1},0)}\; 
	e^{i\lambda_{(k_{1},0)}T}
	\widehat{g_{2}(y)}(0)\;
	\frac{1}{2\pi}
\end{split}	
\end{equation} 
and, in view of \eqref{med2},
\begin{equation}\label{h formg3}
	\begin{split}
		 h_{(k_{1},0)}&=(2\pi)^{3} \widehat{u_{1}}(k_{1},0) 
		e^{-i\lambda_{(k_{1},0)}T}.
	\end{split}	
\end{equation}

Similarly, from Lemma \eqref{invertmatrix} (i)-(iv), if $\mathbf{k}=(k_{1},k_{2})\in \mathbb{Z}^{2}$ with $k_{1}= 0$ and $k_{2}\neq 0,$  we see that \eqref{h formg} reduces to
\begin{equation}\label{h formg4}
	\begin{split}
		2\pi \widehat{u_{1}}(0,k_{2})
		&=\sum_{j_{1}\in \mathbb{Z}} h_{(j_{1},k_{2})}\; 
		e^{i\lambda_{(0,k_{2})}T}
		\left( \int\limits_{0}^{T}
		e^{-i\lambda_{(0,k_{2})} t}\;
		\overline{q_{(j_{1},k_{2})}(t)} 
		\;dt \right)
		\widehat{g_{1}(x)}(-j_{1})\;
		m_{2}^{k_{2},k_{2}}.
	\end{split}
\end{equation}
In view of hypothesis $(H2),$ we see that $\lambda_{(j_{1},k_{2})}=\lambda_{(0,k_{2})}$ only for $j_{1}=\pm 0.$
Hence, from \eqref{dualbg}-\eqref{defq}, and \eqref{med1} we deduce that
\begin{equation}\label{h formg5}
	\begin{split}
		2\pi \widehat{u_{1}}(0,k_{2})
		&= h_{(0,k_{2})}\; 
		e^{i\lambda_{(0,k_{2})}T}
		\widehat{g_{1}(x)}(0)\;
		\frac{1}{2\pi}
	\end{split}	
\end{equation} 
or, equivalently,
\begin{equation}\label{h formg6}
	\begin{split}
		h_{(0,k_{2})}&=(2\pi)^{3} \widehat{u_{1}}(0,k_{2}) 
		e^{-i\lambda_{(0,k_{2})}T}.
	\end{split}	
\end{equation}
Finally, if $\mathbf{k}=(k_{1},k_{2})\in \mathbb{Z}^{2}$ with $k_{1}\neq 0$ and $k_{2}\neq 0,$ we get from \eqref{h formg} and Lemma \eqref{invertmatrix} (iv), that
\begin{equation}\label{h formg7}
	\begin{split}
		2\pi \widehat{u_{1}}(k_{1},k_{2})
		&=\sum_{j_{2}\in \mathbb{Z}} 
		h_{(k_{1},j_{2})}\; 
		e^{i\lambda_{(k_{1},k_{2})}T}
		\left( \int\limits_{0}^{T}
		e^{-i\lambda_{(k_{1},k_{2})} t}\;
		\overline{q_{(k_{1},j_{2})}(t)} 
		\;dt \right)
		\widehat{g_{2}(y)}(k_{2}-j_{2})\;
		m_{1}^{k_{1},k_{1}}\\
		&\quad + 
		\sum_{j_{1}\in \mathbb{Z}} 
		h_{(j_{1},k_{2})}\; 
		e^{i\lambda_{(k_{1},k_{2})}T} 
		\left( \int\limits_{0}^{T}
		e^{-i\lambda_{(k_{1},k_{2})} t}\;
		\overline{q_{(j_{1},k_{2})}(t)} 
		\;dt \right) 
		\widehat{g_{1}(x)}(k_{1}-j_{1})\;
		m_{2}^{k_{2},k_{2}}.
	\end{split}
\end{equation}
According to \eqref{dualbg}-\eqref{defq}, all terms in the sums are zero, except the ones where the entire variables $j_{1}, j_{2}$ solve the following equations: $$\lambda_{(k_{1},j_{2})}= \lambda_{(k_{1},k_{2})},$$
$$\lambda_{(j_{1},k_{2})}= \lambda_{(k_{1},k_{2})}.$$
In view of hypothesis $(H2),$ we have the solutions $j_{2}=k_{2}$ and $j_{1}=\pm k_{1},$ respectively. Hence, 
\begin{equation}\label{h formg8}
	\begin{split}
		2\pi \widehat{u_{1}}(k_{1},k_{2})
		&=
		h_{(k_{1},k_{2})}\; 
		e^{i\lambda_{(k_{1},k_{2})}T}
		\widehat{g_{2}(y)}(0)\;
	\frac{1}{2 \pi}
	 + 
		h_{(k_{1},k_{2})}\; 
		e^{i\lambda_{(k_{1},k_{2})}T}  
		\widehat{g_{1}(x)}(0)\;
	\frac{1}{2\pi}\\
		&\quad+
		h_{(-k_{1},k_{2})}\; 
		e^{i\lambda_{(k_{1},k_{2})}T}  
		\widehat{g_{1}(x)}(2k_{1})\;
		\frac{1}{2\pi},
	\end{split}
\end{equation}
or, equivalently,
\begin{equation}\label{h formg9}
	\begin{split}
		(2\pi)^{2} e^{-i\lambda_{(k_{1},k_{2})}T} \widehat{u_{1}}(k_{1},k_{2})
		&=
		h_{(k_{1},k_{2})}\; 
		\left(
		\widehat{g_{2}(y)}(0)
		+   
		\widehat{g_{1}(x)}(0)\right)
		+
		h_{(-k_{1},k_{2})}\;  
		\widehat{g_{1}(x)}(2k_{1}).
	\end{split}
\end{equation}
Note that in the left-hand side of \eqref{h formg9} appear the coefficients $h_{(k_{1},k_{2})}$ and $h_{(-k_{1},k_{2})}$; so, in order to determine them we will couple \eqref{h formg9} with another equation.  To do so, observe that 
\begin{equation*}
	\begin{split}
		2\pi \widehat{u_{1}}(-k_{1},k_{2})
		&=\sum_{j_{2}\in \mathbb{Z}} 
		h_{(-k_{1},j_{2})}\; 
		e^{i\lambda_{(-k_{1},k_{2})}T}
		\left( \int\limits_{0}^{T}
		e^{-i\lambda_{(-k_{1},k_{2})} t}\;
		\overline{q_{(-k_{1},j_{2})}(t)} 
		\;dt \right)
		\widehat{g_{2}(y)}(k_{2}-j_{2})\;
		m_{1}^{-k_{1},-k_{1}}\\
		&\quad + 
		\sum_{j_{1}\in \mathbb{Z}} 
		h_{(j_{1},k_{2})}\; 
		e^{i\lambda_{(-k_{1},k_{2})}T} 
		\left( \int\limits_{0}^{T}
		e^{-i\lambda_{(-k_{1},k_{2})} t}\;
		\overline{q_{(j_{1},k_{2})}(t)} 
		\;dt \right) 
		\widehat{g_{1}(x)}(-k_{1}-j_{1})\;
		m_{2}^{k_{2},k_{2}}.
	\end{split}
\end{equation*}
As before, from \eqref{dualbg}-\eqref{defq}, all terms in the sums are zero, except the ones for which  $j_{1}, j_{2}$ solves 
 $$\lambda_{(-k_{1},j_{2})}=\lambda_{(-k_{1},k_{2})},$$
$$\lambda_{(j_{1},k_{2})}=\lambda_{(-k_{1},k_{2})}.$$
In view of hypothesis $(H2)$ again, we must have $j_{2}=k_{2}$ and $j_{1}=\mp k_{1}$. Hence, 
\begin{equation*}
	\begin{split}
		2\pi \widehat{u_{1}}(-k_{1},k_{2})
		&=
		h_{(-k_{1},k_{2})}\; 
		e^{i\lambda_{(-k_{1},k_{2})}T}
		\widehat{g_{2}(y)}(0)\;
		\frac{1}{2 \pi}
		 +  
		h_{(-k_{1},k_{2})}\; 
		e^{i\lambda_{(-k_{1},k_{2})}T}  
		\widehat{g_{1}(x)}(0)\;
		\frac{1}{2 \pi}\\
&\quad + 
h_{(k_{1},k_{2})}\; 
e^{i\lambda_{(-k_{1},k_{2})}T} 
\widehat{g_{1}(x)}(-2k_{1})\;
\frac{1}{2 \pi},	
	\end{split}
\end{equation*}
or, which is the same,
\begin{equation}\label{h formg12}
	\begin{split}
		(2\pi)^{2} e^{-i\lambda_{(-k_{1},k_{2})}T} \widehat{u_{1}}(-k_{1},k_{2})
		&=
		h_{(-k_{1},k_{2})}\; 
		\left(
		\widehat{g_{2}(y)}(0)
		+   
		\widehat{g_{1}(x)}(0)\right)
		+
		h_{(k_{1},k_{2})}\;  
		\widehat{g_{1}(x)}(-2k_{1}).
	\end{split}
\end{equation}
It follows from \eqref{h formg9} and \eqref{h formg12} that we must solve the linear system
$$
\begin{cases}
	h_{(k_{1},k_{2})}\; 
	\left(
	\widehat{g_{2}(y)}(0)
	+   
	\widehat{g_{1}(x)}(0)\right)
	+
	h_{(-k_{1},k_{2})}\;  
	\widehat{g_{1}(x)}(2k_{1})
	=		(2\pi)^{2} e^{-i\lambda_{(k_{1},k_{2})}T} \widehat{u_{1}}(k_{1},k_{2}), & \\
h_{(-k_{1},k_{2})}\; 
\left(
\widehat{g_{2}(y)}(0)
+   
\widehat{g_{1}(x)}(0)\right)
+
h_{(k_{1},k_{2})}\;  
\widehat{g_{1}(x)}(-2k_{1})
=	(2\pi)^{2} e^{-i\lambda_{(-k_{1},k_{2})}T} \widehat{u_{1}}(-k_{1},k_{2}). & 	
\end{cases}
$$
To see that such a system has a (unique) solution, using \eqref{med1}-\eqref{med2}, we may write it as
\begin{equation*}
	\begin{pmatrix}
		\displaystyle{\frac{1}{\pi}} & \widehat{g_{1}(x)}(2k_{1})\\
		\widehat{g_{1}(x)}(-2k_{1}) & 	\displaystyle{\frac{1}{\pi}}
	\end{pmatrix}
\begin{pmatrix}
	h_{(k_{1},k_{2})} \\
	h_{(-k_{1},k_{2})}
\end{pmatrix}
= (2 \pi)^{2}
\begin{pmatrix}
	e^{-i\lambda_{(k_{1},k_{2})}T} \widehat{u_{1}}(k_{1},k_{2})\\
	e^{-i\lambda_{(-k_{1},k_{2})}T} \widehat{u_{1}}(-k_{1},k_{2})
\end{pmatrix}.
\end{equation*}
If we set 
$$M:=	\begin{pmatrix}
	\displaystyle{\frac{1}{\pi}} & \widehat{g_{1}(x)}(2k_{1})\\
	\widehat{g_{1}(x)}(-2k_{1}) & 	\displaystyle{\frac{1}{\pi}}
\end{pmatrix},$$
then 
$$\text{det}(M)=\displaystyle{\frac{1}{\pi^{2}}}-
\widehat{g_{1}(x)}(2k_{1})\;
\overline{\widehat{g_{1}(x)}(2k_{1})}= \displaystyle{\frac{1}{\pi^{2}}}-
\left|\widehat{g_{1}(x)}(2k_{1})\right|^{2}=:d_{k_{1}}.$$
Now, observe that since $g_1$ is a non-negative function, from \eqref{med1}, we deduce, 
$$
\left|\widehat{g_{1}(x)}(2k_{1})\right|\leq \frac{1}{2\pi} \int_0^{2\pi}g_1(x)dx=\frac{1}{2\pi}.
$$
Hence,
$$d_{k_{1}}\geq\frac{1}{\pi^{2}}-\frac{1}{4 \pi^{2}}=\frac{3}{4 \pi^{2}},\;\forall k_{1}\in \mathbb{Z},$$
and the matrix $M$ is invertible with 
\begin{equation}\label{invertmat}
	M^{-1}=
	\begin{pmatrix}
		\dfrac{1}{\pi d_{k_{1}}} & -\dfrac{\widehat{g_{1}(x)}(2k_{1})}{d_{k_{1}}}\\\\
		-\dfrac{\widehat{g_{1}(x)}(-2k_{1})}{d_{k_{1}}} &
		\dfrac{1}{\pi d_{k_{1}}}
	\end{pmatrix}.
\end{equation}
This implies that the above system has a solution and, in addition, there exists a constant $D$, independent of $k_{1}\in \mathbb{Z}^{\ast},$ such that
\begin{equation*}
	\|M^{-1}\|\leq D,
\end{equation*}
where $\|M^{-1}\|$ is the Euclidean norm of the matrix $M^{-1}.$\\

\noindent
{\bf{Step 3.}}
The function $h$ defined by
\eqref{thecontrol} with $h_{(0,0)}=0,$ $h_{(k_{1},0)}$ given  by \eqref{h formg3}, $h_{(0,k_{2})}$ given by \eqref{h formg6}, and $h_{(k_{1},k_{2})}$ given as the solution of
\begin{equation}\label{hforma}
	\begin{pmatrix}
		h_{(k_{1},k_{2})} \\
		h_{(-k_{1},k_{2})}
	\end{pmatrix}
	= M^{-1}
	\begin{pmatrix}
		(2 \pi)^{2} e^{-i\lambda_{(k_{1},k_{2})}T} \widehat{u_{1}}(k_{1},k_{2})\\
		(2 \pi)^{2} e^{-i\lambda_{(-k_{1},k_{2})}T} \widehat{u_{1}}(-k_{1},k_{2})
	\end{pmatrix},\;\;\text{for all}\;(k_{1},k_{2})\in \mathbb{Z}^{\ast}\times \mathbb{Z}^{\ast}.
\end{equation}
belongs to $L^{2}([0,T];H_{p}^{s}(\mathbb{T}^2))$.

Indeed, recall from Lemma \ref{basislemma} that $\{q_{ \mathbf{k} }\}_{ \mathbf{k} \in \mathbb{J}}$ is a Riesz basis for $H$. Thus, from {\cite[Theorem 7.13]{9}} and the definition of $q_{ \mathbf{k} }$, $\mathbf{k}\in \mathbb{Z}^2$ it follows that $\{q_{ \mathbf{k} }\}_{\mathbf{k}\in \mathbb{Z}^2}$ is a bounded sequence in $L^2([0,T])$. Hence, in view of  \eqref{thecontrol}, we deduce the existence of a positive constant $C$ such that 
\begin{align}\label{hes}
	\begin{split}
		\|h\|^{2}_{L^{2}([0,T];H_{p}^{s} (\mathbb{T}^{2}))}
		&=\sum_{\mathbf{k}\in \mathbb{Z}^{2}} (1+|\mathbf{k}|)^{2s}|h_{\mathbf{k}}|^2 \int_{0}^{T}|q_{\mathbf{k}}(t)|^{2}\;dt\\
		&\leq C \sum_{\mathbf{k}\in \mathbb{Z}^{2}}(1+|\mathbf{k}|)^{2s} |h_{\mathbf{k}}|^{2}\\
		&=C \sum_{k_{1}\in \mathbb{Z}^{\ast}} (1+|(k_{1},0)|)^{2s}
		|h_{(k_{1},0)}|^{2}\\
		&\quad+ C\sum_{k_{2}\in  \mathbb{Z}^{\ast}}(1+|(0,k_{2})|)^{2s} |h_{(0,k_{2})}|^{2}\\
		&\quad+ C\sum_{(k_{1},k_{2})\in \mathbb{Z}^{\ast}\times \mathbb{Z}^{\ast}} (1+|(k_{1},k_{2})|)^{2s}|h_{(k_{1},k_{2})}|^{2}.\\
	\end{split}
\end{align}
 From \eqref{hforma}, we infer that
$$
	|h_{(k_{1},k_{2})}|^{2}\leq \|M^{-1}\|^{2} (2\pi)^{4}\left(|\widehat{u_{1}}(k_{1},k_{2})|^{2}+|\widehat{u_{1}}(-k_{1},k_{2})|^{2}\right),\;\forall (k_{1},k_{2})\in \mathbb{Z}^{\ast}\times \mathbb{Z}^{\ast}.
$$
Therefore,
\begin{equation}\label{est}
	\begin{split}
(1+|(k_{1},k_{2})|)^{2s}	|h_{(k_{1},k_{2})}|^{2}&\leq D^{2} (2\pi)^{4}(1+|(k_{1},k_{2})|)^{2s} |\widehat{u_{1}}(k_{1},k_{2})|^{2}\\
&\quad+ D^{2} (2\pi)^{4}|(1+|(-k_{1},k_{2})|)^{2s} |\widehat{u_{1}}(-k_{1},k_{2})|^{2},
   \end{split}
\end{equation}
for all $(k_{1},k_{2})\in \mathbb{Z}^{\ast}\times \mathbb{Z}^{\ast}.$
Thus,  identities  \eqref{h formg3}, \eqref{h formg6} and \eqref{est} imply
\begin{equation}\label{hform5g}
	\begin{split}
		\|h\|^{2}_{L^{2}([0,T];H_{p}^{s} (\mathbb{T}^{2})) }
		&\leq C (2\pi)^{6} \sum_{k_{1}\in \mathbb{Z}^{\ast} } (1+|(k_{1},0)|)^{2s}
|\widehat{u_{1}}(k_{1},0)|^{2}\\
&\quad+ C (2\pi)^{6}\sum_{k_{2}\in  \mathbb{Z}^{\ast}} (1+|(0,k_{2})|)^{2s} |\widehat{u_{1}}(0,k_{2})|^{2}\\
&\quad+ 2 C (2\pi)^{4} D^{2}\sum_{(k_{1},k_{2})\in \mathbb{Z}^{\ast}\times \mathbb{Z}^{\ast}} (1+|(k_{1},k_{2})|)^{2s}| \widehat{u_{1}}(k_{1},k_{2})|^{2}\\	
&\leq \nu^{2} \sum_{\mathbf{k}\in \mathbb{Z}^{2}} (1+|\mathbf{k}|)^{2s}| \widehat{u_{1}}(\mathbf{k})|^{2},
	\end{split}
\end{equation}
where $\nu^{2}=3\max\{C(2\pi)^{6},2C(2\pi)^{4}D^{2}\}.$
Since $u_1\in H^s_p(\mathbb{T}^{2})$ the above series converges. In addition \eqref{esth} holds (recall $u_{0}=0$). This completes the proof of the theorem.
\end{proof}

The proof of Theorem \ref{crtI1} is very similar with minor modifications, so we omit the details.\\

\begin{cor}\label{ope}
Assume  $s\geq 0$. Under the assumptions of Theorem \ref{crtI} or Theorem \ref{crtI1}, for any $T>\frac{2\pi }{\gamma'}$, there exists a {unique}  bounded linear operator
$$ 
	\begin{cases}
		\Phi: H_p^{s}(\mathbb{T}^{2})\times H_p^{s}(\mathbb{T}^{2}) \longrightarrow L^{2}([0,T];H_p^{s}(\mathbb{T}^{2}))& \\
		(u_{0},u_{1})\longmapsto \Phi (u_{0},u_{1})=:h
	\end{cases}
$$
such that
$$u_{1}=U(T)u_{0}+\int_{0}^{T} U(T-s)(G(\Phi(u_{0},u_{1})))(\cdot,\cdot,s) ds$$
and 
$$\|\Phi(u_{0},u_{1})\|_{L^{2}([0,T];H_p^{s}(\mathbb{T}^{2}))} \leq \nu \left( \|u_{0}\|_{H_p^{s}(\mathbb{T}^{2})}+\|u_{1}\|_{H_p^{s}(\mathbb{T}^{2})}\right),$$
for some positive constant $\nu.$
\end{cor}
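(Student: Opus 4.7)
The plan is to package the explicit control construction from the proof of Theorem \ref{crtI} into a bounded linear operator. First I would reduce to the case $u_{0} = 0$: since $\{U(t)\}_{t\in\mathbb{R}}$ is a unitary group on $H^{s}_{p}(\mathbb{T}^{2})$ preserving the zero Fourier mode (Theorem \ref{wellpose}), the element $\tilde{u}_{1} := u_{1} - U(T)u_{0}$ lies in $H^{s}_{p}(\mathbb{T}^{2})$ with $\widehat{\tilde{u}_{1}}(0,0)=0$ whenever $\widehat{u_{0}}(0,0)=\widehat{u_{1}}(0,0)$. So if I can produce a bounded linear map
\[
\Phi_{0}: H^{s}_{p}(\mathbb{T}^{2}) \longrightarrow L^{2}([0,T];H^{s}_{p}(\mathbb{T}^{2}))
\]
sending a terminal state $v$ with $\widehat{v}(0,0)=0$ to the control $h(v)$ that steers the zero initial state to $v$ in time $T$, then setting $\Phi(u_{0},u_{1}) := \Phi_{0}(u_{1}-U(T)u_{0})$ will automatically give a linear operator on $H^{s}_{p}(\mathbb{T}^{2})\times H^{s}_{p}(\mathbb{T}^{2})$.

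To build $\Phi_{0}$ I would just read off the construction in the proof of Theorem \ref{crtI}: define
\[
\Phi_{0}(v)(x,y,t) := \sum_{\mathbf{j}\in\mathbb{Z}^{2}} h_{\mathbf{j}}(v)\, \overline{q_{\mathbf{j}}(t)}\, \psi_{\mathbf{j}}(x,y),
\]
where each coefficient $h_{\mathbf{j}}(v)$ is the explicit linear functional of $\widehat{v}(\pm k_{1}, k_{2})$ given by \eqref{h formg3}, \eqref{h formg6} and \eqref{hforma} (the latter via the invertible $2\times 2$ matrix $M$ from \eqref{invertmat}, whose norm is bounded independently of $k_{1}$). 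Linearity of $v\mapsto h_{\mathbf{j}}(v)$ is immediate from those formulas, and the estimate \eqref{hform5g} proved in Theorem \ref{crtI} reads exactly $\|\Phi_{0}(v)\|_{L^{2}([0,T];H^{s}_{p}(\mathbb{T}^{2}))}\leq \nu\|v\|_{H^{s}_{p}(\mathbb{T}^{2})}$, giving boundedness of $\Phi_{0}$. Combining this with the isometry $\|u_{1}-U(T)u_{0}\|_{H^{s}_{p}(\mathbb{T}^{2})}\leq \|u_{1}\|_{H^{s}_{p}(\mathbb{T}^{2})}+\|u_{0}\|_{H^{s}_{p}(\mathbb{T}^{2})}$ yields the estimate for $\Phi$ displayed in the corollary.

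The Duhamel identity $u_{1} = U(T)u_{0}+\int_{0}^{T} U(T-s)\bigl(G(\Phi(u_{0},u_{1}))\bigr)(\cdot,\cdot,s)\,ds$ is just the mild-solution formula for \eqref{pcont1} evaluated at $t=T$, and by construction this mild solution equals $u_{1}$ (Theorem \ref{crtI}). For uniqueness of $\Phi$, I would argue that the coefficients $h_{\mathbf{j}}(v)$ of the control are forced by the moment equation \eqref{Momenteq1}: the biorthogonality of $\{q_{\mathbf{k}}\}$ with $\{e^{-i\lambda_{\mathbf{k}}t}\}$ provided by Lemma \ref{basislemma}, together with the invertibility of the $2\times 2$ block in \eqref{invertmat}, determines $h_{\mathbf{j}}(v)$ uniquely from $\widehat{v}(\mathbf{k})$, so the operator produced by this construction is the unique one compatible with the moment formulation. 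I do not expect any real obstacle here: the corollary is essentially a repackaging of Theorem \ref{crtI} and its estimate in operator form, and the only point requiring minor attention is the affine shift $u_{1}\mapsto u_{1}-U(T)u_{0}$ used to reduce to zero initial data while preserving the mean-zero hypothesis.
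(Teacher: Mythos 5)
Your proposal is correct and matches the paper's (implicit) argument: the paper offers no separate proof of Corollary \ref{ope}, which is exactly the repackaging you describe of the constructive proof of Theorem \ref{crtI} — reduce to $u_{0}=0$ via $u_{1}\mapsto u_{1}-U(T)u_{0}$, observe that the coefficients in \eqref{h formg3}, \eqref{h formg6}, \eqref{hforma} depend linearly on $\widehat{u_{1}}$, and read boundedness off \eqref{hform5g}. The only caveat, which affects the paper's statement as much as your argument, is that ``uniqueness'' can only mean uniqueness within the ansatz class spanned by $\overline{q_{\mathbf{j}}}\psi_{\mathbf{j}}$ (where biorthogonality pins down the coefficients), since steering controls in all of $L^{2}([0,T];H^{s}_{p}(\mathbb{T}^{2}))$ are certainly not unique.
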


\begin{rem}
	The constant $\nu$ in Corollary \ref{ope} depends only on $s,\;g_{1}$ and $T$ (resp. $s,\;g_{2}$ and $T$) under assumptions of 
	 Theorem \ref{crtI} (resp. Theorem \ref{crtI1}).
\end{rem}

\begin{cor}\label{Observabi}
	Let	$s\geq 0.$ Under assumptions of Theorem \ref{crtI} or Theorem \ref{crtI1}, for any $T>\frac{2\pi }{\gamma'}$   there exists $\delta>0$ such that
	\begin{equation}\label{obserineq}
		\int_{0}^{T} \left\|G^{\ast} U(-t)^{\ast} \phi \right\|^{2}_{H_p^{s}(\mathbb{T}^{2})}(t) dt \geq \delta^{2} \|\phi\|^{2}_{H_p^{s}(\mathbb{T}^{2})},\;\forall \phi \in H_p^{s}(\mathbb{T}^{2}),
	\end{equation}
	where the constant $\delta$ depends only on $s,g_{1},$ and $T$ (resp. $s,g_{2},$ and $T$) under assumptions of 
	Theorem \ref{crtI} (resp. Theorem \ref{crtI1}).
\end{cor}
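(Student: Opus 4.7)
The observability inequality should follow from Corollary \ref{ope} by the classical duality argument (HUM-style); no new hard analysis is required.

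First, fix $\phi \in H_p^{s}(\mathbb{T}^{2})$ and invoke Corollary \ref{ope} with $(u_{0},u_{1}) = (0,\phi)$. This produces a control $h := \Phi(0,\phi) \in L^{2}([0,T]; H_p^{s}(\mathbb{T}^{2}))$ satisfying
\begin{equation*}
\phi = \int_{0}^{T} U(T-t)\, G h(t)\, dt, \qquad \|h\|_{L^{2}([0,T]; H_p^{s}(\mathbb{T}^{2}))} \leq \nu \|\phi\|_{H_p^{s}(\mathbb{T}^{2})}.
\end{equation*}

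Second, I would pair this identity with $\phi$ itself in the $H_p^{s}(\mathbb{T}^{2})$ inner product. Since $G$ is bounded on $H_p^{s}(\mathbb{T}^{2})$ by \eqref{opl}, its Hilbert-space adjoint $G^{\ast}$ is bounded on $H_p^{s}(\mathbb{T}^{2})$, and moving both $U(T-t)^{\ast}$ and $G^{\ast}$ across the inner product yields
\begin{equation*}
\|\phi\|_{H_p^{s}(\mathbb{T}^{2})}^{2} = \int_{0}^{T} \bigl(h(t),\, G^{\ast} U(T-t)^{\ast}\phi\bigr)_{H_p^{s}(\mathbb{T}^{2})} dt.
\end{equation*}
Cauchy--Schwarz in $L^{2}([0,T]; H_p^{s}(\mathbb{T}^{2}))$ followed by the control estimate then gives
\begin{equation*}
\|\phi\|_{H_p^{s}(\mathbb{T}^{2})} \leq \nu \left( \int_{0}^{T} \|G^{\ast} U(T-t)^{\ast} \phi\|_{H_p^{s}(\mathbb{T}^{2})}^{2}\, dt \right)^{\!1/2}.
\end{equation*}

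To rewrite this in the precise form of \eqref{obserineq}, I would use that $U(t)$ is unitary on $H_p^{s}(\mathbb{T}^{2})$ for every $s\in\Real$, which is immediate from its Fourier representation $\widehat{U(t)\phi}(\mathbf{k}) = e^{ik_{1}b(\mathbf{k})t}\widehat{\phi}(\mathbf{k})$ and Parseval's identity, together with $U(t)^{\ast} = U(-t)$. A change of variable $s = T-t$ converts the integrand to $\|G^{\ast} U(s)^{\ast}\phi\|^{2}$; substituting $\phi \mapsto U(T)\phi$ (a $H_p^{s}$-isometry, hence permissible) and performing one further change of variable rewrites the integrand as $\|G^{\ast} U(-t)^{\ast}\phi\|^{2}$, as required. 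Setting $\delta := 1/\nu$ finishes the proof, and the claimed dependence of $\delta$ on $(s,g_{1},T)$ or $(s,g_{2},T)$ is inherited directly from the dependence of $\nu$ noted in the remark after Corollary \ref{ope}.

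The main (mild) pitfall is purely bookkeeping: one must be careful to use the $H_p^{s}$-adjoint $G^{\ast}$ rather than the $L_p^{2}$-adjoint $G$ (the paper records that $G$ is self-adjoint in $L_p^{2}$, which is irrelevant here), and then chain the unitarity of $U(T)$ with the two changes of variables in the correct order to produce the exact form $G^{\ast} U(-t)^{\ast}$ appearing in \eqref{obserineq}.
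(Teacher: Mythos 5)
Your proposal is correct and is essentially the paper's argument made explicit: the paper simply invokes the classical HUM equivalence between exact controllability and the observability inequality (citing Liu and Rosier), and your duality computation --- applying Corollary \ref{ope} with $(0,\phi)$, pairing the resulting identity with $\phi$, Cauchy--Schwarz, and the unitarity of $U(t)$ on $H_p^{s}(\mathbb{T}^{2})$ to land on the exact form $G^{\ast}U(-t)^{\ast}$ --- is precisely the standard proof of that equivalence in this setting. The one caveat, inherited from the paper's own statement of Corollary \ref{ope} rather than introduced by you, is that steering $0$ to $\phi$ is only possible when $\widehat{\phi}(0,0)=0$ because $G$ annihilates the mean (indeed $G^{\ast}$ kills constants, so \eqref{obserineq} fails for nonzero constants); strictly speaking both corollaries should be read on the mean-zero subspace.
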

\begin{proof}
This is a consequence  of the	Hilbert Uniqueness Method (HUM) due to J.-L. Lions \cite{Lions}. Actually, as is well known, the exact controllability is equivalent to the observability inequality \eqref{obserineq}. See for instance \cite[Theorem 2.3]{Liu} or \cite[Theorem 2.4]{Lionel Rosier}.
\end{proof}

\begin{rem}
	If $\gamma'= +\infty,$ then Corollaries \ref{ope} and \ref{Observabi} are valid for any time $T>0.$
\end{rem}

\section{Stabilization Results}\label{stabsec}
In this section we prove the exponential stabilization result stated in Theorem  \ref{estabilization}. First, we show if $K$ is a bounded operator in $H_p^{s}(\mathbb{T}^{2})$ then system  \eqref{stabp} is globally well-posed in $H_p^{s}(\mathbb{T}^{2}),$ $s\geq 0.$

\begin{thm}\label{well-posedness}
	Let $u_{0}\in H_p^{s}(\mathbb{T}^{2}),$ $s\geq 0.$ Then the IVP \eqref{stabp} has a unique (mild) solution 
	$$u\in C([0,\infty);H_p^{s}(\mathbb{T}^{2})).$$
\end{thm}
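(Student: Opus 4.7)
The plan is to view the closed-loop equation \eqref{stabp} as a bounded perturbation of the free evolution generated by $\partial_{x}\mathcal{L}$ and apply a standard contraction argument to the associated Duhamel formula
$$u(t) = U(t)u_{0} + \int_{0}^{t} U(t-\tau)\, G(Ku)(\tau)\, d\tau,$$
where $\{U(t)\}_{t\in\mathbb{R}}$ is the unitary group from Theorem \ref{wellpose}. The crucial point is that, because the symbol $ik_{1}b(\mathbf{k})$ of $\partial_{x}\mathcal{L}$ is purely imaginary, the Fourier representation $\widehat{U(t)\phi}(\mathbf{k}) = e^{ik_{1}b(\mathbf{k})t}\widehat{\phi}(\mathbf{k})$ recalled in the remark after Theorem \ref{wellpose} shows that $U(t)$ extends to an isometry on every $H^{s}_{p}(\mathbb{T}^{2})$; in particular $\|U(t)\phi\|_{H^{s}_{p}(\mathbb{T}^{2})} = \|\phi\|_{H^{s}_{p}(\mathbb{T}^{2})}$ for all $s\geq 0$ and $t\in\mathbb{R}$.

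Fix $T>0$ and consider the operator $\Gamma: C([0,T]; H^{s}_{p}(\mathbb{T}^{2}))\to C([0,T]; H^{s}_{p}(\mathbb{T}^{2}))$ defined by
$$\Gamma v(t) := U(t)u_{0} + \int_{0}^{t} U(t-\tau)\, G(Kv)(\tau)\, d\tau.$$
Combining the isometry of $U$ on $H^{s}_{p}(\mathbb{T}^{2})$, the estimate \eqref{opl} for $G$ (with constant $C$ depending only on $s, g_{1}, g_{2}$), and the assumed boundedness of $K$ with norm $\|K\|$, a straightforward computation yields
$$\|\Gamma v(t) - \Gamma w(t)\|_{H^{s}_{p}(\mathbb{T}^{2})} \leq C\|K\|\, T\, \|v - w\|_{C([0,T]; H^{s}_{p}(\mathbb{T}^{2}))}.$$
Choosing $T_{\ast}$ with $C\|K\|T_{\ast} < 1$ makes $\Gamma$ a strict contraction on $C([0,T_{\ast}]; H^{s}_{p}(\mathbb{T}^{2}))$, so the Banach fixed-point theorem produces a unique mild solution $u \in C([0,T_{\ast}]; H^{s}_{p}(\mathbb{T}^{2}))$ of \eqref{stabp}.

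The point is that $T_{\ast}$ depends only on $C$ and $\|K\|$, not on $\|u_{0}\|_{H^{s}_{p}(\mathbb{T}^{2})}$; hence the local solution can be iterated on the intervals $[nT_{\ast}, (n+1)T_{\ast}]$ to yield a global solution $u\in C([0,\infty); H^{s}_{p}(\mathbb{T}^{2}))$. Alternatively, plugging $v = u$ in the Duhamel identity gives
$$\|u(t)\|_{H^{s}_{p}(\mathbb{T}^{2})} \leq \|u_{0}\|_{H^{s}_{p}(\mathbb{T}^{2})} + C\|K\|\int_{0}^{t}\|u(\tau)\|_{H^{s}_{p}(\mathbb{T}^{2})}\, d\tau,$$
so Gronwall's inequality yields $\|u(t)\|_{H^{s}_{p}(\mathbb{T}^{2})} \leq e^{C\|K\|t}\|u_{0}\|_{H^{s}_{p}(\mathbb{T}^{2})}$, ruling out finite-time blow-up; uniqueness on $[0,\infty)$ follows from the same contraction estimate applied to the difference of two solutions. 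I do not anticipate a genuine obstacle: since $G$ and $K$ are bounded linear operators on $H^{s}_{p}(\mathbb{T}^{2})$, this is essentially the bounded-perturbation theorem for $C_{0}$-semigroups (see \cite[Chapter~3]{Pazy}), with the only check being that $U(t)$ is an isometry on $H^{s}_{p}(\mathbb{T}^{2})$ for $s\geq 0$, which is immediate from its Fourier multiplier representation.
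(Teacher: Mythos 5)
Your argument is correct, and it reaches the same conclusion as the paper by a more hands-on route. The paper disposes of this theorem in three lines: $\partial_{x}\mathcal{L}$ generates a $C_{0}$-(semi)group on $H^{s}_{p}(\mathbb{T}^{2})$ (Theorem \ref{wellpose}), $GK$ is a bounded linear operator there, so the bounded-perturbation theorem (\cite[page 76]{Pazy}) says $\partial_{x}\mathcal{L}+GK$ generates a $C_{0}$-semigroup $\{T(t)\}_{t\geq0}$, and the unique mild solution is $u(t)=T(t)u_{0}$. What you do instead is essentially reprove that perturbation theorem in this concrete setting: you set up the Duhamel map, verify that $U(t)$ is an isometry on $H^{s}_{p}(\mathbb{T}^{2})$ from its Fourier multiplier representation, contract on a time interval whose length depends only on $C\|K\|$ (legitimate, since the equation is linear), and then either iterate or apply Gronwall to go global. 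All the ingredients you need — the isometry of $U(t)$, the bound \eqref{opl} for $G$, and the standing assumption that $K$ is bounded on $H^{s}_{p}(\mathbb{T}^{2})$ — are exactly the ones the paper's citation implicitly relies on, so there is no gap. The trade-off: the paper's version is shorter and hands you the perturbed semigroup $\{T(t)\}_{t\geq 0}$ as an object (convenient for the stabilization argument that follows), while yours is self-contained and yields the explicit a priori bound $\|u(t)\|_{H^{s}_{p}(\mathbb{T}^{2})}\leq e^{C\|K\|t}\|u_{0}\|_{H^{s}_{p}(\mathbb{T}^{2})}$, which the abstract theorem also gives but does not display.
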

\begin{proof}
	We know, from Theorem \ref{wellpose}, that operator  $\partial_{x} \mathcal{L}$, with domain $H^{s+r}_{p}(\mathbb{T}^{2})$ is the infinitesimal generator of a unitary group in $H^{s}_{p}(\mathbb{T}^{2})$. Hence, it also generates of a $C_{0}$-semigroup $\left\{ U(t)\right\}_{t\geq 0}$. We also know that $GK$ is a bounded linear operator on $H^{s}_{p}(\mathbb{T}^{2}).$ From the semigroup theory (see \cite[page 76]{Pazy}), we get that operator $\partial_{x} \mathcal{L}+GK,$ which is a perturbation of $\partial_{x} \mathcal{L}$ by a bounded linear operator, is the infinitesimal generator of a $C_{0}$-semigroup, say, 
	$\left\{ T(t)\right\}_{t\geq 0}$ on $H^{s}_{p}(\mathbb{T}^{2}).$ Consequently, \eqref{stabp} has a unique mild solution.
\end{proof}

\begin{proof}[Proof of Theorem \ref{estabilization}]
 The well-posedness of IVP \eqref{estag} is given by Theorem \ref{well-posedness}. Then, Theorem \ref{estabilization} is a direct consequence of Corollary \ref{Observabi} and the classical principle: Exact controllability implies exponential stabilizability for conservative control systems (see \cite[Theorem 2.3-2.4]{Liu} and \cite[Theorem 2.1]{Slemrod}). Actually, according to \cite{Slemrod} one can choose
	$$K_{\lambda}=-G^{\ast}D_{T,\lambda}^{-1},$$
	where, for some $T>\frac{2\pi}{\gamma'},$
	\begin{equation}\label{feedbaclaw}
		D_{T,\lambda}\phi= \int_{0}^{T}e^{-2\lambda \tau} U(-\tau)GG^{\ast}U(-\tau)^{\ast} \phi\; d\tau,\;\;\;\forall \phi \in H_{p}^{s}(\mathbb{T}^{2}),
	\end{equation}
	and $U(t)$ is the $C_{0}$-semigroup generated by $\partial_{x}\mathcal{L}.$
\end{proof}

\section{Applications}\label{appsec}
In many situation, \textit{internal waves} arise due to the gravitational effects, at the interface of two layers in a stratified fluid. Several theoretical models exist which govern the evolution of long internal waves with small amplitudes in such cases. When the height of the heavier fluid is much larger than that of the upper layer, the motion is described by the Benjamin-Ono equation (BO) \cite{T Benjamin, H Ono}:
\begin{equation}\label{BO}
	\partial_{t}u-\mathcal{H}\partial^{2}_{x}u+u \partial_{x}u=0,\;\;x\in \mathbb{R},\;t>0,
\end{equation}
where $\mathcal{H}$ denotes the Hilbert transform.
Equation \eqref{BO} may also be viewed as a general model for the propagation of  weakly nonlinear long waves incorporating the lowest-order effects of nonlinearity and non-local dispersion and it turns out  to be important in many others physical situations (see, for instance, \cite{Danov and Ruderman, Ishimori, Matsuno and Kaup}).

On the other hand, when the total depth of the a fluid is very small, the motion is governed by the Korteweg-de Vries (KdV) equation
\begin{equation}\label{KdV}
	\partial_{t}u+\partial^{3}_{x}u+u \partial_{x}u=0,\;\;x\in \mathbb{R},\;t>0,
\end{equation}
derived in \cite{Korteweg and Vries} as a model for the propagation of long one dimensional surface gravity waves with small amplitude in a shallow channel of water. The KdV equation has a very rich structure from the mathematical point of view and it has also been derived in several other physical context (see, for instance, \cite{Ablowitz and Clarkson}).

In both situations above, when transversal effects must also be considered, the resulting equations are bidimensional. Hence, in this section, we present some particular examples of bidimensional  dispersive PDE's, where the general control  theory developed in this work can be applied to their linear counterpart.

\subsection{The Zakharov-Kuznetsov (ZK) equation:}One of the most accepted generalization of the KdV equation in two dimensions is the Zakharov-Kuznetsov (ZK) equation:
\begin{equation}\label{ZK}
	\partial_{t}u+\partial_{x}\Delta u+u \partial_{x}u=0,\;\;(x,y)\in \mathbb{R}^{2},\;t>0,
\end{equation}
where $\Delta$ denotes the bidimensional Laplacian, that is, $\Delta=\partial_{x}^{2}+\partial_{y}^{2}.$ 
Equation \eqref{ZK} models ion-acoustic waves propagating in a low-pressure magnetized plasma. It was derived in \cite{Zakharov and Kuznetsov} where the existence and stability for circularly symmetric soliton solutions were established. Questions of local well-posedness for \eqref{ZK} in the Sobolev spaces $H^s(\mathbb{R}^2)$ may be found, for instance in, \cite{f}, \cite{kinoshita}, \cite{LP}, \cite{mp}. The initial-value problem posed on the two dimensional torus was studied in  \cite{lipa}. In addition,  in \cite{MC and LR}  the authors addressed the exact controllability of the linear ZK equation on a rectangle with a left Dirichlet boundary control  by using the flatness approach.

Here we address the exact controllability associated with the linear equation
 \begin{equation}\label{LZK}
	\begin{cases}
		\partial_{t}u+\partial_{x}\Delta u=Gh, \;\;\;(x,y)\in \mathbb{T}^{2},\;t>0,\\
		u(x,y,0)=u_0(x,y).
	\end{cases}	
\end{equation}
In order to set \eqref{LZK} as in \eqref{FEQ1}, we define $\mathcal{L}=-\Delta$ so that 
$$b(\mathbf{k}):=|\mathbf{k}|^{2}=k_{1}^{2}+k_{2}^{2}, \;\;\text{for all}\;\mathbf{k}= (k_{1},k_{2}) \in \mathbb{Z}^{2},
$$
and the eigenvalues of $\partial_{x}\mathcal{L}$ are $i\lambda_{\mathbf{k}}$ with
\begin{equation*}
	\lambda_{\mathbf{k}}:=k_{1}b(\mathbf{k})=k_{1}(k_{1}^{2}+k_{2}^{2}).
\end{equation*}
Clearly,
$$|b(\mathbf{k})|\leq |\mathbf{k}|^{2},\;\;\text{for all}\;\mathbf{k}=(k_{1},k_{2}) \in \mathbb{Z}^{2}$$
and \eqref{Scont} holds.

Also, it is easy to check that $(H3)$ holds and the value of $\gamma$ in \eqref{gammacon1} is equal to $1.$
Note that, if $n \in \mathbb{N},$  then  $\lambda_{(2^{2n},0)}\to \infty,$ $\lambda_{(1,2^{3n})}\to \infty,$ as $n \to \infty$ and  $\lambda_{(2^{2n},0)} \neq \lambda_{(1,2^{3n})}$ with
$$\left|\lambda_{(2^{2n},0)}- \lambda_{(1,2^{3n})}\right|
=|2^{6n}-1-2^{6n}|= 1.$$
Hence, $\gamma'$ defined in \eqref{gammalinhacond1} is also equal to $1.$ 

Applying Theorem \ref{crtI1}
we conclude that system \eqref{LZK} is exactly controllable in any time $T>2\pi$ in the Sobolev space $H_{p}^{s}(\mathbb{T}^{2})$ with $s\geq 0,$ where the control function $h$ is given by \eqref{thecontrol}. Also, Theorem \ref{estabilization} holds and the system  \eqref{LZK} is exponentially stabilizable with any decay rate $\lambda>0.$

\subsection{The 2D Benjamin-Ono (2D-BO) equation:}\label{BOexample}
In this subsection, we consider a two-dimensional extension of the BO equation, which reads as
\begin{equation}\label{2D-BO}
	\partial_{t}u-\mathcal{H}^{(x)}\partial^{2}_{xy} u+u \partial_{y}u=0,\;\;(x,y)\in \mathbb{R}^{2},\;t>0,
\end{equation}
where $\mathcal{H}^{(x)}$ denotes the Hilbert transform with respect to the $x$-variable, that is, via Fourier transform,
$$
\widehat{\mathcal{H}^{(x)}u}(\xi,\eta)=-i\;\text{sng}(\xi)\widehat{u}(\xi,\eta), \qquad (\xi,\eta)\in\mathbb{R}^2.
$$
From the mathematical point of view, local and global well-posedness for \eqref{2D-BO} have been studied in \cite{Aniura} and \cite{Aniura Tesis}.

The control equation associated to the linear part of \eqref{2D-BO} on the periodic setting reads as follows:
\begin{equation}\label{2DBO}
	\partial_{t}u-\mathcal{H}^{(x)}\partial^{2}_{xy}u=Gh,\;\; u(x,y,0)=u_{0}(x,y), \; \;\;(x,y)\in \mathbb{T}^{2},\;t>0.
\end{equation}
In this case the operator $\mathcal{L}$ takes the form $\mathcal{L}=\mathcal{H}^{(x)}\partial_{y},$ where the Hilbert transform $\mathcal{H}^{(x)}$ in the frequency space is given  by
$$\widehat{ \mathcal{H}^{(x)}u }(k_{1},k_{2}):=-i\; \text{sng}(k_{1}) \widehat{u}(k_{1},k_{2}),\;\;\mathbf{k}=(k_{1},k_{2})\in \mathbb{Z}^2.$$
Therefore, $$b(\mathbf{k})=k_{2}\;\text{sgn}(k_{1}),$$
and the eigenvalues of operator $\partial_{x}\mathcal{L}$ have  the form $i\lambda_{\mathbf{k}}$ with 
\begin{equation}\label{2DBO1}
	\lambda_{\mathbf{k}}:=k_{1}b(\mathbf{k})=|k_{1}| k_{2},\;\;\;\mathbf{k}\in \mathbb{Z}^{2}.
\end{equation}

In what follows we shall show that Theorems \ref{crtI}, and \ref{estabilization} can be applied to prove that  \eqref{2DBO} is exactly controllable in any time $T>2\pi,$ and exponentially stabilizable with any given decay rate in the Sobolev space $H_{p}^{s}(\mathbb{T}^{2})$, $s\geq 0.$ Indeed, first of all note that
$$
|b(\mathbf{k})|\leq |\mathbf{k}|,\;\;
\;\mathbf{k} \in \mathbb{Z}^2,
$$
and \eqref{Scont} is true with $r=2$. From 
\eqref{2DBO1} it is clear that $(H2)$ holds. Additionally, for any $\mathbf{k},\mathbf{k}' \in \mathbb{J}, $  
$$\left|\lambda_{\mathbf{k}}- \lambda_{\mathbf{k}'}\right|
=\Big| |k_{1}| k_{2}-|k'_{1}|  k'_{2} \Big|\geq 1.$$
Also, note that, if  $k_{1}\to \infty$ then $\lambda_{(k_{1}+1,1)}\to \infty,$ $\lambda_{(k_{1},1)}\to \infty,$ and 
$\lambda_{(k_{1}+1,1)} \neq \lambda_{(k_{1},1)}$ with
$$\left|\lambda_{(k_{1}+1,1)}- \lambda_{(k_{1},1)}\right|
=\left||k_{1}+1| -|k_{1}|\right|= 1.$$
Therefore, $\gamma$ and $\gamma'$ defined respectively by \eqref{gammacon} and \eqref{gammalinhacond} are, in this case, equal to 1. The result follows as desired.

\subsection{The Benjamin-Ono-Zakharov-Kuznetsov (BOZK) equation:} 
Another model that may be seen as a two-dimensional extension of the BO equation is the so called BOZK equation:
\begin{equation}\label{BLZKnl}
	\partial_{t}u-\mathcal{H}^{(x)}\partial_{x}^{2}u +\partial_{x} \partial_{y}^{2} u+u\partial_{x}u=0, \; \;\;(x,y)\in \mathbb{R}^{2},\;t>0.
\end{equation}
The equation in \eqref{BLZKnl} was introduced in \cite{lmsv} \cite{jcms}, and it has
applications to electromigration in thin nanoconductors on a dielectric substrate. Local and global well-posedness for the Cauchy problem associated with \eqref{BLZKnl} in Sobolev spaces was studied, for instance, in \cite{cunha}, \cite{cunha1}, and \cite{rib}.

In this subsection we investigate the control and stabilization properties of linear BOZK equation:
\begin{equation}\label{BOZK}
	\partial_{t}u-\mathcal{H}^{(x)}\partial_{x}^{2}u +\partial_{x} \partial_{y}^{2} u=Gh,\;\; u(x,y,0)=u_{0}(x,y), \; \;\;(x,y)\in \mathbb{T}^{2},\;t>0.	
\end{equation}
Here, we consider the operator $\mathcal{L}$ defined in \eqref{FEQ1} as $\mathcal{L}:= \mathcal{H}^{(x)}\partial_{x} - \partial_{y}^{2}.$ Therefore, $$b(\mathbf{k})=|k_{1}|+k_{2}^{2}, \;\;\;\mathbf{k}=(k_{1},k_{2})\in \mathbb{Z}^{2},$$ and 
$$|b(\mathbf{k})|=|k_{1}|+k_{2}^{2}\leq |\mathbf{k}|+|\mathbf{k}|^{2} \leq 2 |\mathbf{k}|^{2},\;\;\text{for all}\; \mathbf{k}\in \mathbb{Z}^{2}. $$
The eigenvalues of operator $\partial_{x}\mathcal{L}$ are $i\lambda_{\mathbf{k}}$ with 
\begin{equation*}
	\lambda_{\mathbf{k}}:=k_{1}b(\mathbf{k})=k_{1}(|k_{1}|+k_{2}^{2}).
\end{equation*}

Next, we shall verify that (H3) holds. We easily check that $\lambda_{(k_{1},-k_{2})}=\lambda_{(k_{1},k_{2})}$ and  $\lambda_{(-k_{1},k_{2})}=-\lambda_{(k_{1},k_{2})}$ for all $(k_{1},k_{2}) \in \mathbb{Z}^{2}$. Also, for $(k_{1},k_{2}) \in \mathbb{Z}^{2}$,  with $k_{1}\neq 0$, it is clear that the unique entire solutions of the $j_{2}$-equation
$$\lambda_{(k_{1},j_{2})}=\lambda_{(k_{1},k_{2})}\iff k_{1}j_{2}^{2}=k_{1}k_{2}^{2},$$
are $j_{2}=\pm k_{2}.$ On the other hand, for $(k_{1},k_{2}) \in \mathbb{Z}^{2}$, with $k_{2}\neq 0,$ we analyze 
the entire solutions of the $j_{1}$-equation
$$\lambda_{(j_{1},k_{2})}=\lambda_{(k_{1},k_{2})} \iff j_{1}(|j_{1}|+k_{2}^{2})=k_{1}(|k_{1}|+k_{2}^{2}),$$
which can be rewritten in the following form
\begin{equation}\label{BOZK1}
 j_{1}|j_{1}|-k_{1}|k_{1}|+k_{2}^{2}(j_{1}-k_{1})=0.
\end{equation}
It is enough to assume $k_{1}\neq 0,$ because the unique solution of  \eqref{BOZK1} with $k_{1}=0$ is clearly $j_{1}=0$ and the desired result follows. Immediately, we observe that $j_{1}$ and $k_{1}$ should share the same sign. To see this, it suffices to note that  the  expression on left-hand side in \eqref{BOZK1} is strictly positive  if $j_{1}\geq 0$ and $k_{1}<0$  and  strictly negative when $j_{1}\leq 0$ and $k_{1}>0.$  Therefore, to solve equation \eqref{BOZK1} with $k_{1}>0,$ we may assume $j_{1}\geq 0$ to see that it is equivalent to
$$ 
(j_{1}-k_{1})(j_{1}+k_{1}+k_{2}^{2})=0,
$$
from which we obtain that the unique entire solution is $j_{1}=k_{1}.$ Similarly, when $k_{1}<0,$ equation \eqref{BOZK1} is equivalent to
$$ 
(j_{1}-k_{1})(-j_{1}-k_{1}+k_{2}^{2})=0,$$
and again the unique entire solution is $j_{1}=k_{1}.$
Consequently, $(H3)$ holds.

Finally, we note that $\gamma$ given by \eqref{gammacon1} is equal to 1. In addition, by taking $\lambda_{(k_{1},0)}$ and $\lambda_{(1,k_{1})}$ for any $0<k_{1} \in \mathbb{Z}$ we easily verify that $\lambda_{(k_{1},0)}\to \infty,$ $\lambda_{(1,k_{1})} \to \infty$ as $k_{1}\to \infty$ and $\lambda_{(k_{1},0)} \neq \lambda_{(1,k_{1})}$ with
$$|\lambda_{(k_{1},0)}-\lambda_{(1,k_{1})}|=1.$$
from which we infer $\gamma'=1$ (see \eqref{gammalinhacond1}).
Thus, we can apply Theorems \ref{crtI1} and \ref{estabilization}  to deduce that system \eqref{BOZK} is exactly controllable in any $T>2 \pi,$ and exponentially stabilizable with any decay rate in the Sobolev space $H_{p}^{s}(\mathbb{T}^{2})$, $s\geq 0.$

\subsection{The dispersion generalized Benjamin-Ono-Zakharov-Kuznetsov (dgBOZK) equation:} To finish our applications, we shall consider the dgBOZK equation
\begin{equation}\label{DGBOZK11}
	\partial_{t}u-D_{x}^{\alpha}\partial_{x}u +\partial_{x} \partial_{y}^{2} u+u\partial_xu=0,\;\;\;\;(x,y)\in \mathbb{R}^{2},\;t>0,	
\end{equation}
where $\alpha>0$ and $D_{x}^{\alpha}$ is defined via Fourier transform  as $\widehat{D_{x}^{\alpha}u}(\xi,\eta)=|\xi|^{\alpha}\widehat{u}(\xi,\eta).$
In the case $\alpha\in(1,2)$, equation \eqref{DGBOZK11} may be seen as an interpolation between the ZK and BOZK equations in the sense that in the limiting cases $\alpha=2$ and $\alpha=1$,  \eqref{DGBOZK11} reduces to ZK and BOZK equations, respectively. The interested reader will find some local and global well-posedness results for the associated Cauchy problem in  \cite{cunha2} and \cite{rib}.

As in the earlier examples, here we  study the control problem for the linear dgBOZK equation: 
\begin{equation}\label{DGBOZK}
	\partial_{t}u-D_{x}^{\alpha}\partial_{x}u +\partial_{x} \partial_{y}^{2} u=Gh,\;\; u(x,y,0)=u_{0}(x,y),\;\;(x,y)\in \mathbb{T}^{2},\;t>0,	
\end{equation}
where $\alpha>0$ and $D_{x}^{\alpha}$ is now defined  as $\widehat{D_{x}^{\alpha}u}(\mathbf{k})=|k_{1}|^{\alpha}\widehat{u}(\mathbf{k}).$ Thus,  the operator $\mathcal{L}$ reads as $\mathcal{L}:=D_{x}^{\alpha}-\partial_{y}^{2},$ so that 
\begin{equation*}
	b(\mathbf{k})=|k_{1}|^{\alpha}+k_{2}^{2},
\end{equation*}
and 
$$|b(\mathbf{k})|\leq |k_{1}|^{\alpha}+k_{2}^{2}\leq |\mathbf{k}|^{\alpha}+|\mathbf{k}|^{2}\leq \begin{cases}
	2|\mathbf{k}|^{2}, & \text{if} \;0<\alpha<2,\\
	2|\mathbf{k}|^{\alpha},& \text{if} \;\alpha \geq 2,
\end{cases}$$
 which means that \eqref{Scont} holds.
The eigenvalues of $\partial_{x}\mathcal{L}$ are $i\lambda_{\mathbf{k}}$ with
\begin{equation*}
	\lambda_{\mathbf{k}}:=k_{1}b(\mathbf{k})= k_{1}(|k_{1}|^{\alpha} +k_{2}^{2}).
\end{equation*}

Let us check that (H3) also holds here. Indeed, clearly the eigenvalues are even in the second variable and odd in the first one. Also, for any $\mathbf{k}=(k_{1},k_{2}) \in \mathbb{Z}^{2}$ given with $k_{1}\neq 0,$ it is easy to show that the unique entire solutions of the $j_{2}$-equation
$$\lambda_{(k_{1},j_{2})}=\lambda_{(k_{1},k_{2})}\iff k_{1}(|k_{1}|^{\alpha}+j_{2}^{2})= k_{1}(|k_{1}|^{\alpha}+k_{2}^{2}),$$
are $j_{2}=\pm k_{2}.$ 
On the other hand,
if $\mathbf{k}=(k_{1},k_{2}) \in \mathbb{Z}^{2}$ is such that $k_{2}\neq 0,$ we now analyze 
the entire solutions of the $j_{1}$-equation
\[
	\begin{split}
		\lambda_{(j_{1},k_{2})}=\lambda_{(k_{1},k_{2})} &\iff j_{1}(|j_{1}|^{\alpha}+k_{2}^{2})=k_{1}(|k_{1}|^{\alpha}+k_{2}^{2}),
\end{split}
\]
which can be rewritten as
\begin{equation}\label{dgBOZK3}
	j_{1}|j_{1}|^{\alpha}-k_{1}|k_{1}|^{\alpha}+k_{2}^{2}(j_{1}-k_{1})=0.
\end{equation}
Similar to the analysis for the BOZK equation, we may assume $k_{1}\neq 0$ and observe that $j_{1},$ $k_{1}$ share the same sign. Without loss of generality, let us assume $k_1>0$ (the case $k_1<0$ being similar). Therefore,  we may assume $j_{1}\geq 0$ and  \eqref{dgBOZK3} is equivalent to
\begin{equation}\label{key1}
	\begin{split}
		j_{1}^{\alpha+1}-k_{1}^{\alpha+1}+k_{2}^{2}(j_{1}-k_{1})=0.
	\end{split}
\end{equation}
Recall we want to show that \eqref{key1} has no other solution than $j_1=k_1$. Assume by contradiction the existence of another solution, say, with $j_1>k_1$. Then, from the Mean Value Theorem, for some $\theta$ between $k_{1}$ and $j_{1}$, we have
\[
		(\alpha+1) \theta^{\alpha}(j_{1}-k_{1})+k_{2}^{2}(j_{1}-k_{1})=0
\]
or
\[
		(j_{1}-k_{1}) [(\alpha +1) \theta^{\alpha}+k_{2}^{2}]=0.
\]
Since the expression between brackets is positive, this last identity is clearly a contradiction. This shows hypothesis $(H3)$ holds. 

Hence, Theorems \ref{crtI1} and \ref{estabilization} also apply in this case and
we conclude that system \eqref{DGBOZK} is exactly controllable at any time $T>2\pi$ and exponentially stabilizable with any decay rate.


\subsection*{Acknowledgment}
This paper was written when the first author had a postdoctoral position
at IMECC-UNICAMP, whose hospitality he gratefully acknowledges.
He also acknowledges the financial support from FAPESP/Brazil grant 2020/14226-4.
The second author is partially supported by CNPq/Brazil grant 303762/2019-5 and FAPESP/Brazil grant
2019/02512-5.


\end{document}